\newtheorem{theorem}{Theorem}[section]   
\newtheorem{corollary}[theorem]{Corollary}
\newtheorem{proposition}[theorem]{Proposition}
\theoremstyle{definition}
\newtheorem{definition}[theorem]{Definition}  
\theoremstyle{remark}
\newtheorem{example}[theorem]{Example}      
\numberwithin{equation}{section}
\def\F{\mathbb{F}}
\def\Z{\mathbb{Z}}
\def\fy{\mathfrak{f}_Y}
\def\Fy{\mathfrak{F}_Y}
\def\y{\mathbf{y}}
\def\SYmod{S_Y\!/\!\!\sim_Y}
\def\simkappa{\!\!\sim_{\kappa}}
\def\sds{{\sf SDS}\xspace}
\def\gds{{\sf GDS}\xspace}
\def\sdss{{\sf SDS}s\xspace}
\def\gdss{{\sf GDS}s\xspace}
\def\Aut{\mathsf{Aut}}
\def\Acyc{\mathsf{Acyc}}
\def\Shift{\ensuremath{\text{\boldmath{$\sigma$}}}}
\def\rev{\ensuremath{\text{\boldmath{$\rho$}}}}
\def\<{\langle}
\def\>{\rangle}
\def\card#1{|#1|}
\def\bigcard#1{\bigl|#1\bigr|}
\def\Circle{\mathsf{Circ}}
\def\Star{\mathsf{Star}}
\def\vset{\mathrm{v}}
\def\eset{\mathrm{e}}
\def\Per{\mathsf{Per}}
\def\nor{\mathsf{nor}}
\def\NOR{\mathsf{Nor}}
\def\parity{\ensuremath{\mathsf{par}}}
\def\Par{\mathsf{Par}}
\def\eg{e.g.\xspace}
\def\ie{i.e.\xspace}
\begin{document}
\title{Cycle Equivalence of Graph Dynamical Systems}
\author{%%
Matthew~Macauley\qquad\qquad
Henning~S.~Mortveit
}
\date{}

\begin{abstract}
  Graph dynamical systems (\gdss) can be used to describe a wide range
  of distributed, nonlinear phenomena. In this paper we characterize
  \emph{cycle equivalence} of a class of finite \gdss called
  sequential dynamical systems (\sdss). In general, two finite \gdss
  are cycle equivalent if their periodic orbits are isomorphic as
  directed graphs. Sequential dynamical systems may be thought of as
  generalized cellular automata, and use an \emph{update order} to
  construct the dynamical system map.
  The main result of this paper is a characterization of cycle
  equivalence in terms of shifts and reflections of the \sds update
  order. We construct two graphs $C(Y)$ and $D(Y)$ whose components
  describe update orders that give rise to cycle equivalent \sdss. The
  number of components in $C(Y)$ and $D(Y)$ is an upper bound for the
  number of cycle equivalence classes one can obtain, and we enumerate
  these quantities through a recursion relation for several graph
  classes.  The components of these graphs encode dynamical
  neutrality, the component sizes represent periodic orbit structural
  stability, and the number of components can be viewed as a system
  complexity measure.
\end{abstract}

\subjclass[2000]{37B99;93D99;20F55}
%% 2000 MSC:
%%  37B99 (Topological dynamics, None of the above - 
%%  93D99 (Stability, None of the above)
%%  20F55 Reflection and Coxeter groups
\keywords{%%
  Finite dynamical systems over graphs, cycle equivalence, update
  order, generalized cellular automata, enumeration, stability,
  complexity. 
}
\thanks{This work was partially supported by Fields Institute in
Toronto, Canada.} 
\maketitle
%%

%% ------------------------------------------------------------

\section{Introduction}
\label{sec:intro}
Sequential dynamical systems (\sdss) were introduced
in~\cite{Barrett:99a,Mortveit:01a}. These are dynamical systems
constructed from $(i)$ a finite undirected graph $Y$ where each vertex
has a state, $(ii)$ a sequence of vertex functions, and $(iii)$ a word
$w$ over the vertex set of $Y$. The \emph{\sds map} is constructed as
the composition of the functions in the order specified by
$w$. As such, they represent a useful framework for describing
distributed phenomena with causal interactions. This paper is about
\emph{cycle equivalence} of finite graph dynamical systems, which we
study in the context of \sdss. Two \sdss are cycle equivalent if their
periodic orbits are isomorphic as directed graphs. We will study how
the update order affects the structure of the periodic orbits, and
thus the long-term behavior of the system.  As an example, we show the
surprising result that if the \gds base graph is a tree then there is
only one possible periodic orbit configuration, and this holds for any
fixed choice of vertex functions. \sds and \sds-like algorithms occur
in many application areas such as~\cite{Karaoz:04,Pautz:02}, and our
results will provide a behavioral complexity measure for these.

This paper is organized as follows. In Section~\ref{sec:terminology}
we describe \sds related terminology and relevant background results
from~\cite{Mortveit:01a,Reidys:98a} on functional and dynamical
equivalence of \sdss. In Section~\ref{sec:main}, we prove one of the
main results of this paper: any two \sdss where the update orders
differ by a cyclic shift are cycle equivalent, and this holds for any
choice of vertex functions. Additionally, when the vertex states are
taken from $\F_2=\{0,1\}$, which is the standard choice in most
studies of cellular automata, then reflections of the update order
also encode cycle equivalent \sdss. We also show how shifts and
reflections of update orders have a natural interpretation in terms of
source-to-sink operations on acyclic orientations of the \gds graph.
In Section~\ref{sec:foundation} we introduce the graphs $C(Y)$ and
$D(Y)$ which form the basis for our analysis and characterization of
cycle equivalence over general graphs. These graphs are examples of
neutral networks, and we characterize some of their structural
properties. We study the functions $\kappa(Y)$ and $\delta(Y)$, which
count the connected components of $C(Y)$ and $D(Y)$, respectively. We
show how $\delta(Y)$ is given in terms of $\kappa(Y)$ and give several
results for the computation of $\kappa(Y)$ with implications to
dynamics. These functions can be regarded as a measure for system
complexity since they are upper bounds for the number of \sds maps up
to cycle equivalence achievable through variations of the update
order. As a computational example we demonstrate how $\kappa(Y)$
increases from $\Theta(n)$ for radius-$1$ rules (the elementary
cellular automaton rules) to $\Theta(n\cdot 2^n)$ for radius-$2$
rules. We also show how the presence of symmetries in the base graph
may allow for significantly improved bounds in certain cases.
In the summary section we show how cycle equivalence of \sdss is
closely related to Coxeter theory. Some of the results that we prove
in this paper have a natural analog when translated into the language
of Coxeter groups. This opens the door to use the rich mathematical
tools and results of Coxeter theory to study sequential dynamical
systems, something that has never been done before.

%% ------------------------------------------------------------

\section{Background and Definitions}
\label{sec:terminology}

Let $Y$ be a finite undirected graph with vertex set
$\vset[Y]=\{1,\dots,n\}$, and edge set $\eset[Y]$. Since most graphs
in this paper are finite and undirected we simply refer to this class
of graphs as ``graphs'' and specify if needed. The
\emph{$1$-neighborhood} of vertex $v$ in $Y$ is
$B_1(v;Y)=\bigl\{v'\in\vset[Y]\mid \{v,v'\}\in\eset[Y]\bigr\} \cup
\{v\}$, and the \emph{ordered $1$-neighborhood} $n[v]$ of $v$ is the
sequence of vertices from $B_1(v;Y)$ ordered in increasing order. The
\emph{degree} of vertex $v$ is written $d(v)$. Each vertex $v$ is
assigned a state $y_v\in K$ where $K$ is a finite set. In the
following $y_v$ is called a \emph{vertex state} and the $n$-tuple
$\y=(y_1,\dots,y_n)$ is a \emph{system state}.\footnote{In the context
  of, \eg cellular automata a system state is frequently called a
  \emph{configuration}.} We write
\begin{equation}
  \y[v]=(y_{n[v](1)},\ldots,y_{n[v](d(v)+1)})\;,
\end{equation}
for the restriction of the system state to the vertices in $n[v]$, and
let $\y'[v]$ denote the same tuple but with the vertex state $y_v$
omitted. The finite field with $q=p^k$ elements is denoted~$\F_q$.

Let $\fy:=(f_i)_{i\in\vset[Y]}$ be a sequence of \emph{vertex
functions} $f_i\colon K^{d(i)+1}\longrightarrow K$, and define the
sequence of \emph{$Y$-local functions} $\Fy:=(F_i)_{i\in\vset[Y]}$
with $F_i\colon K^n\longrightarrow K^n$ by
\begin{equation}
 F_i(y_1,\cdots,y_n) =
   (y_1,\ldots,y_{i-1}, f_i(\y[i]),y_{i+1},\ldots,y_n)\;.
\end{equation}
It is clear that $\fy$ completely determines $\Fy$, and
vice-versa. However, there are settings when it is easier to speak of
one rather than the other.

Let $W_Y$ denote the set of words over $\vset[Y]$.\footnote{Also
referred to as the Kleene star or Kleene closure of $\vset[Y]$.} Words
are written as $w=(w_1,w_2,\ldots,w_m )$, $w=w_1w_2\cdots w_m$,
$w=(w(1),w(2),\ldots, w(m))$, etc. The subset of $W_Y$ where each
element of $\vset[Y]$ occurs exactly once is denoted $S_Y$. The
elements of $S_Y$ may thus be thought of as permutations of
$\vset[Y]$. The symmetric group $S_n$ acts on system states by
\begin{equation}
  \label{eq:saction}
  \gamma\cdot(y_1,\ldots,y_n)=(y_{\gamma^{-1}(1)},\ldots,y_{\gamma^{-1}(n)})\;.
\end{equation}
\begin{definition}[Sequential dynamical system]
  A \emph{sequential dynamical system} (\sds) is a triple $(Y,\Fy,w)$
  where $Y$ is a graph, $\Fy=(F_i)_{i\in\vset[Y]}$ is a sequence of
  $Y$-local functions, and $w=(w_1,\ldots,w_m)\in W_Y$.
The associated \sds map $[\Fy,w]\colon K^n\longrightarrow K^n$ is the
function composition
\begin{equation}
  [\Fy,w]=F_{w_m}\circ F_{w_{m-1}}\circ\cdots\circ F_{w_2}\circ F_{w_1}\;.
\end{equation}
\end{definition}
The graph $Y$ of an \sds is called the \emph{base graph}, and when
$w\in S_Y$, the \sds is referred to as a \emph{permutation \sds.}  A
sequence of $Y$-local functions $\Fy$ is \emph{$\Aut(Y)$-invariant} if
$\gamma\circ F_v=F_{\gamma(v)}\circ\gamma$ for all $v\in\vset[Y]$ and
all $\gamma\in\Aut(Y)$. Here, the composition of a function $K^n\to
K^n$ with a permutation of $K$ is interpreted as
in~\eqref{eq:saction}. The corresponding sequence of vertex functions
$\fy$ is $\Aut(Y)$-invariant if $\Fy$ is $\Aut(Y)$-invariant.
The \emph{phase space} of the map $\phi\colon K^n \longrightarrow K^n$
is the directed graph $\Gamma(\phi)$ with vertex set $K^n$ and edge
set $\bigl\{(\y,\phi(\y))\mid\y\in K^n\bigr\}$. The following example
illustrates these concepts. 

\begin{example}[Asynchronous Elementary Cellular Automaton rule \#~$1$.] 
  \label{ex:running1}
  Let $Y = \Circle_4$ which is the graph with vertex set $\{1,2,3,4\}$
  and edges all $\{i,i+1\}$ with indices modulo $4$, and take binary
  vertex states. Then $y = (y_1, y_2, y_3, y_4)$, $n[1] = (1,2,4)$,
  and $y[1] = (y_1, y_2, y_4)$. If we use the Boolean $\nor$-function
  $\nor_3 \colon \F_2^3 \longrightarrow\F_2$ (\ie ECA \#~$1$) given by
  $\nor_3(x,y,z) = (1+x)(1+y)(1+z)$ to induce the vertex functions we
  get, \eg $F_1(y) = (\nor_3(y[1]), y_2, y_3, y_4)$. With update order
  $\pi = (1,2,3,4)$ we get the \sds map
  \begin{equation}
    \label{eq:sdsexmp}
    [\NOR_Y, \pi] = F_4 \circ  F_3 \circ F_2 \circ F_1 \;.
  \end{equation}
  It is easy to verify that $[\NOR_Y,\pi](0,0,0,0) = (1, 0, 1, 0)$. In
  contrast, for a parallel update scheme the state $(0,0,0,0)$ would
  have been mapped to $(1,1,1,1)$. The entire phase space of the \sds
  map in~\eqref{eq:sdsexmp} is displayed on the left in
  Figure~\ref{fig:ex:norce}.
\end{example}

What follows is a short overview of functional and dynamical
equivalence of \sdss. The analysis is largely concerned with the
aspect of update order and characterizes \sds maps for a fixed graph
$Y$ and fixed $Y$-local functions $\Fy$ in terms of $w$. It will
provide the basis for cycle equivalence.

%--------------------------------------------------------------------------

\subsection{Functional Equivalence}
\label{sec:fequiv}

Two \sdss are \emph{functionally equivalent} if their \sds maps are
identical as functions. For a fixed sequence $\Fy$, a natural question
to ask is when is $[\Fy,w]=[\Fy,w']$ for $w,w'\in W_Y$. The
\emph{update graph} $\hat{U}(Y)$ provides an answer to this. The
update graph of $Y$ has vertex set $W_Y$. Two words $w\neq w'$ are
adjacent if they have equal length, say $m$, and $(i)$ they differ
\emph{only} by a transposition of entries $k$ and $k+1$, and
$(ii)$ $\{w_k,w_{k+1}\}\not\in \eset[Y]$. The finite subgraph $U(Y)$
of $\hat{U}(Y)$ induced by the vertex set $S_Y$ is called the
\emph{permutation update graph}, and is denoted $U(Y)$. Clearly, it is
a union of connected components of $\hat{U}(Y)$. Both $U(Y)$ and
$\hat{U}(Y)$ are examples of neutral networks as mentioned in the
introduction. The update graph $U(\Circle_4)$ is shown
Figure~\ref{fig:uc4}.

Let $\sim_Y$ be the equivalence relation on $S_Y$ defined by $\pi
\sim_Y \pi'$ iff $\pi$ and $\pi'$ belong to the same connected
component in $U(Y)$. We denote equivalence classes as $[\pi]_Y$ and
the set of equivalence classes by $\SYmod$, \ie
\begin{equation*}
\SYmod = \{ [\pi]_Y \mid \pi\in S_Y \} \;.
\end{equation*}
By construction, $\pi\sim_Y\sigma$ implies the equality $[\Fy,\pi] =
[\Fy,\sigma]$. If the vertex functions are the Boolean functions as in
Example~\ref{ex:running1}, then $[\NOR_Y,\pi] = [\NOR_Y,\sigma]$
implies $\pi \sim_Y \sigma$ (see~\cite{Mortveit:01a}). It follows that
$\card{\SYmod\!\!}$ is a sharp upper bound for the number of
functionally non-equivalent permutation \sds maps obtainable by
varying the update order.

Functional equivalence can also be characterized through acyclic
orientations. An orientation of $Y$ is a map $O_Y \colon
\eset[Y]\longrightarrow \vset[Y] \times \vset[Y]$ that sends an edge
$\{i,j\}$ to either $(i,j)$ or $(j,i)$. 
% The associated graph $G(O_Y)$ is the graph obtained from $Y$ by
% orienting each of the edges of $Y$ according to $O_Y$.  When no
% ambiguity can arise we will not distinguish $G(O_Y)$ and $O_Y$.
Let $\Acyc(Y)$ denote the set of acyclic orientations of $Y$, that is,
orientations that contain no directed cycles. In~\cite{Reidys:98a} it
is shown that there is a bijection
\begin{equation}
  \label{eq:abij}
  f_Y \colon \SYmod\,\longrightarrow \Acyc(Y)\;.
\end{equation}
A permutation $\pi\in S_Y$ defines a linear order on $\vset[Y]$ by
$\pi_k = i <_{\pi} j = \pi_\ell$ iff $k<\ell$. This order defines an
acyclic orientation $O_Y^\pi$ where $O_Y^{\pi}(\{v,v'\})$ equals
$(v,v')$ if $v <_\pi v'$ and $(v',v)$ otherwise. The map $f_Y$
in~\eqref{eq:abij} sends $[\pi]_Y\in \SYmod$ to $O_Y^\pi$. It follows
that
\begin{equation}
  \label{eq:abound}
  \alpha(Y)=\card{\Acyc(Y)}
\end{equation}
is a sharp upper bound for the number of functionally non-equivalent
permutation \sdss that can be obtained by varying the update order.
The result can be extended to general word update orders $w\in
W_Y$. We do not review this here, but refer to~\cite{Reidys:04a}.

%--------------------------------------------------------------------------

\subsection{Dynamical Equivalence}

Two finite dynamical systems $\phi,\psi \colon K^n \longrightarrow
K^n$ are \emph{dynamically equivalent} if there exists a bijection $h
\colon K^n \longrightarrow K^n$ such that
\begin{equation}
  \phi \circ h = h \circ \psi \;.
\end{equation}
With the discrete topology the concepts of dynamical equivalence and
topological conjugation coincide. Thus, the difference between
functional and dynamical equivalence is that in the former, the phase
spaces are identical, but in the latter, the phase spaces need just be
isomorphic. 
Update orders that are related by an automorphism of the base graph
give rise to dynamically equivalent \sdss. The number of orbits
$\bar{\alpha}(Y)$ under the action of $\Aut(Y)$ on $\SYmod$ given by
$\gamma \cdot [\pi]_Y = [\gamma *\pi]_Y$, where
\begin{equation}
  \label{eq:action}
  \gamma* w=\gamma(w_1),\dots,\gamma(w_m) \;,
\end{equation}
is an upper bound for the number of \sds maps up to dynamical
equivalence. This follows since for \sdss with $\Aut(Y)$-invariant
vertex functions one has (see~\cite{Mortveit:01a})
\begin{equation}
  \label{eq:gamma*pi}
  [\Fy,\gamma*\pi] \circ \gamma = \gamma \circ [\Fy,\pi]
\end{equation}
for all $\pi\in S_Y$ and all $\gamma\in\Aut(Y)$. Via the bijection
in~\eqref{eq:abij}, this action carries over to an action on the set
$\Acyc(Y)$, and the number of orbits is given by
\begin{equation*}
  \bar{\alpha}(Y) = \frac{1}{\card{\Aut(Y)}} \sum_{\gamma \in \Aut(Y)} 
  \alpha(\<\gamma\> \setminus Y ) \;.
\end{equation*}
Here $\<\gamma\>\setminus Y$ denotes the \emph{orbit graph} of the
cyclic group $G=\<\gamma\>$ and $Y$,
see~\cite{Barrett:01a,Barrett:03a}.
This bound is known to be sharp for certain graph
classes~\cite{Barrett:03a}, but in the general case this is still an
open problem.

%% ------------------------------------------------------------

\section{Cycle Equivalence}
\label{sec:main}

\begin{definition}
  Two finite dynamical systems $\phi \colon K^n\longrightarrow K^n$
  and $\psi\colon K^m \longrightarrow K^m$ are \emph{cycle
  equivalent}\footnote{In general one can define this where $\phi$ and
    $\psi$ have different vertex state sets $K$ and $K'$.} if there
  exists a bijection $h \colon \Per(\phi) \longrightarrow \Per(\psi)$
  such that
  \begin{equation}
    \label{eq:conj}
    \psi|_{\Per(\psi)} \circ h = h \circ \phi|_{\Per(\phi)} \;,
  \end{equation}
  where $\psi|_{\Per(\psi)}$ and $\phi|_{\Per(\phi)}$ denote the
  restrictions of the maps to their respective sets of periodic points
  $\Per(\psi)$ and $\Per(\phi)$. Two systems $\phi$ and $\psi$ with
  identical periodic orbits are \emph{functionally cycle equivalent}. 
\end{definition}
\begin{example}
\label{ex:running2}
  As an illustration we continue Example~\ref{ex:running1} with
  $Y=\Circle_4$ with vertex functions $\nor_3 \colon\F_2^3
  \longrightarrow\F_2$ with update orders $\pi = (1,2,3,4)$,
  $\pi'=(1,4,2,3)$ and $\pi'' = (1,3,2,4)$.
  \begin{figure}[ht]
    \centerline{
      \includegraphics[width=0.85\textwidth]{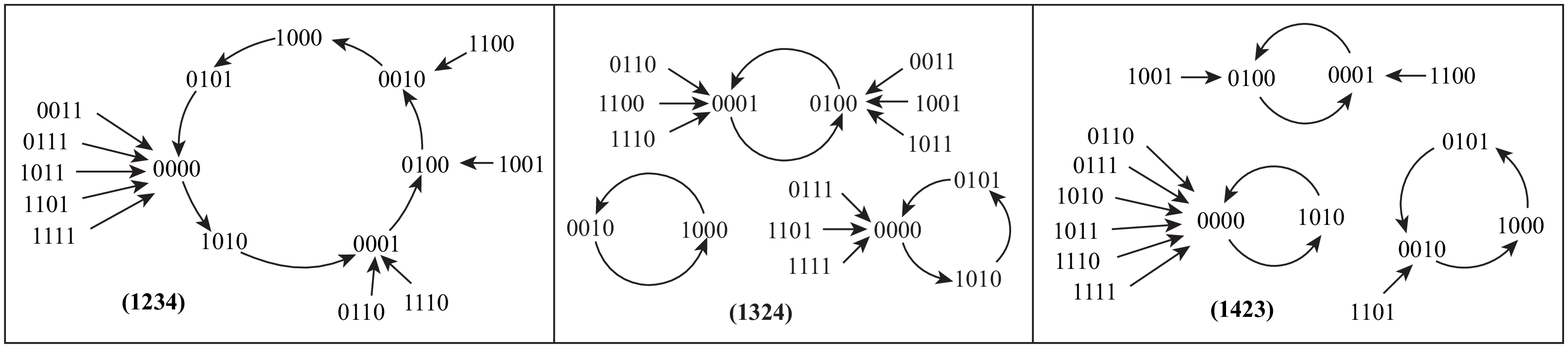}
    }
    \caption{The phase spaces of Example~\ref{ex:running2}.}
    \label{fig:ex:norce}
  \end{figure}
  The two \sds maps $[\NOR_Y,\pi']$ and $[\NOR_Y,\pi'']$ are
  cycle equivalent, which can be seen in the two rightmost phase
  spaces in Figure~\ref{fig:ex:norce}. They are not functionally cycle
  equivalent. Later we show that for $Y=\Circle_4$ there are at
  most $2$ cycle configurations when $K=\F_2 = \{0,1\}$.
\end{example}

It is clear that both functional equivalence and dynamical equivalence
imply cycle equivalence. Define $\sigma,\rho\in S_m$ to be the
permutations
\begin{equation*}
  \sigma = (m, m-1,\ldots,2,1)\;, \qquad 
  \rho = (1,m)(2,m-1)\cdots (\lceil \tfrac{m}{2}\rceil, 
  \lfloor \tfrac{m}{2} \rfloor +1) \;,
\end{equation*}
and let $C_m$ and $D_m$ be the groups
\begin{equation}
  \label{eq:generators}
  C_m=\<\sigma\> \text{\quad and\quad }
  D_m=\<\sigma,\rho\>\;.
\end{equation}
Both $C_m$ and $D_m$ act on the set of length-$m$ update orders
via~\eqref{eq:saction}. Define the $s$-shift $\Shift_s(w)=\sigma^s
\cdot w$, and the reflection $\rev(w)=\rho\cdot w =
(w_m,w_{m-1},\ldots,w_2,w_1)$. We can now state one of the main
results.
\begin{theorem}
  \label{thm:shift}
  For any $w\in W_Y$, the \sds maps $[\Fy, w]$ and $[\Fy,\Shift_s(w)]$
  are cycle equivalent. 
\end{theorem}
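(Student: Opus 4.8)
The plan is to reduce the statement to a single shift and then invoke the elementary principle that cyclically permuting the factors of a composition preserves the structure of its periodic orbits. Since $C_m = \<\sigma\>$ is cyclic and the action on words is a left action, one has $\Shift_s(w) = \sigma^s\cdot w = \Shift_1(\Shift_{s-1}(w))$, so every $s$-shift is an iterate of the $1$-shift. Cycle equivalence is transitive (it asserts that $\phi|_{\Per(\phi)}$ and $\psi|_{\Per(\psi)}$ are conjugate as bijections of their periodic-point sets, and conjugacy of bijections is an equivalence relation). Hence it suffices to prove that $[\Fy,w]$ and $[\Fy,\Shift_1(w)]$ are cycle equivalent for an arbitrary $w=(w_1,\ldots,w_m)$, and then iterate over the $s$ factors.

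Unwinding definitions, $\Shift_1(w)=\sigma\cdot w=(w_2,w_3,\ldots,w_m,w_1)$. Setting $f=F_{w_1}$ and $g = F_{w_m}\circ\cdots\circ F_{w_2}$, I would record the decomposition
\[
  \phi := [\Fy,w] = g\circ f\;,\qquad \psi := [\Fy,\Shift_1(w)] = f\circ g\;.
\]
The first observation is the intertwining identity $\psi\circ f = f\circ\phi$, which is immediate from associativity: $f\circ\phi = f\circ g\circ f = (f\circ g)\circ f = \psi\circ f$. Induction then yields $\psi^k\circ f = f\circ\phi^k$ for all $k\ge 1$, and the symmetric computation gives $\phi^k\circ g = g\circ\psi^k$.

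From the intertwiner it follows directly that $f$ carries periodic points to periodic points: if $\phi^k(x)=x$ then $\psi^k(f(x)) = f(\phi^k(x)) = f(x)$, so $f(\Per(\phi))\subseteq\Per(\psi)$, and likewise $g(\Per(\psi))\subseteq\Per(\phi)$. The one genuinely delicate point, and the step I expect to be the \emph{main obstacle}, is upgrading this semi-conjugacy into a bijection $\Per(\phi)\to\Per(\psi)$, since $f=F_{w_1}$ need not be injective on all of $K^n$. The leverage is that $\phi$ and $\psi$ permute their own (finite) sets of periodic points: $g\circ f = \phi$ restricts to a bijection of $\Per(\phi)$ and $f\circ g = \psi$ restricts to a bijection of $\Per(\psi)$. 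Injectivity of $f$ on $\Per(\phi)$ then follows because $f(x_1)=f(x_2)$ forces $\phi(x_1)=g(f(x_1))=g(f(x_2))=\phi(x_2)$, whence $x_1=x_2$; and surjectivity follows because any $y\in\Per(\psi)$ can be written $y=\psi(y')=f(g(y'))$ with $g(y')\in\Per(\phi)$.

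Finally, restricting the identity $\psi\circ f = f\circ\phi$ to periodic points gives exactly $\psi|_{\Per(\psi)}\circ h = h\circ\phi|_{\Per(\phi)}$ with the bijection $h = f|_{\Per(\phi)}$, which is the required cycle equivalence in the sense of~\eqref{eq:conj}. Iterating this over the factorization $\Shift_s = \Shift_1\circ\cdots\circ\Shift_1$ and chaining the resulting equivalences completes the argument. I would emphasize that the proof uses nothing about the graph or the functions beyond the fact that $\phi$ and $\psi$ are cyclic rearrangements of one composition; in particular it is valid for an arbitrary state set $K$ and arbitrary $\Fy$, matching the full generality asserted in the theorem.
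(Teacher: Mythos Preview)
Your proof is correct and follows essentially the same line as the paper's: both establish the intertwining identity $\psi\circ F_{w_1}=F_{w_1}\circ\phi$ (the paper's commutative diagram~\eqref{eq:conjugates}) and then argue that the restriction of $F_{w_1}$ to periodic points is the required bijection $h$. The only cosmetic difference is that the paper obtains bijectivity via the chain of cardinality inequalities $|P_0|\le|P_1|\le\cdots\le|P_{m-1}|\le|P_0|$ around the full cycle of shifts, whereas you verify injectivity and surjectivity for a single shift directly; your surjectivity step (writing $y=\psi(y')=f(g(y'))$) is in fact a slightly cleaner replacement for the chain argument.
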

\begin{proof}
  Set $P_k = \Per[\Fy, \Shift_k(w)]$.
  By the definition of an \sds map, the following diagram commutes
  \begin{equation}
    \label{eq:conjugates}
    \xymatrix{%%
      P_{k-1} \ar@{->}[rr]^{[\Fy,\,\Shift_{k-1}(w)]} \ar@{->}[d]_{F_{w(k)}} 
      && P_{k-1} \ar@{->}[d]^{F_{w(k)}} \\ 
      P_k \ar@{->}[rr]_{[\Fy,\,\Shift_k(w)]} &&  P_k
    }
  \end{equation}
  for all $1\le k\le m=\card{w}$. Thus we obtain the inclusion
  $F_{w(k)}(P_{k-1}) \subset P_{k}$, and since the restriction map
  $F_{w(k)} \colon P_{k-1} \longrightarrow F_{w(k)}(P_{k-1})$ is an
  injection, it follows that $\card{P_{k-1}} \le \card{P_k}$. We
  therefore obtain the sequence of inequalities
  \begin{equation*}
    \bigcard{\Per[\Fy,w]}
    \leq \bigcard{\Per[\Fy,\Shift_1(w)]} 
    \leq \cdots 
    \leq \bigcard{\Per[\Fy,\Shift_{m-1}(w)]} 
    \leq \bigcard{\Per[\Fy,w]} \;,
  \end{equation*}
  from which it follows that all inequalities are, in fact,
  equalities. Since the graph and state space are finite all the
  restriction maps $F_{w(k)}$ in~\eqref{eq:conjugates} are
  bijections. Clearly~\eqref{eq:conj} holds with $h = F_{w(k)}$,
  and the proof follows.
\end{proof}

Theorem~\ref{thm:shift} shows that acting on the update order by the
cyclic group $C_m$ preserves the cycle structure of the phase
space. We point out that this result holds for any finite set $K$. For
$K=\F_2$ the cycle structure is also preserved under the action of
$D_m$, and is a consequence of:
\begin{proposition}[\cite{Mortveit:01a}]
  \label{prop:inverse}
  Let $(Y, \Fy, w)$ be an \sds over $\F_2$ with periodic points $P
  \subset \F_2^n$. Then 
  \begin{equation}
    \label{eq:inverse}
    \bigl([\Fy,w]\bigl|_P\bigr)^{-1} = [\Fy, \rev(w)]\bigl|_P\;.
  \end{equation}
\end{proposition}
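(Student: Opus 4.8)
The plan is to reuse the conjugation structure from the proof of Theorem~\ref{thm:shift} and to exploit the fact that, over $\F_2$, a $Y$-local map alters a single binary coordinate and so can only act as an involution once it is forced to be injective. Writing $\phi=[\Fy,w]=F_{w_m}\circ\cdots\circ F_{w_1}$, the proof of Theorem~\ref{thm:shift} already supplies, with $P_k=\Per[\Fy,\Shift_k(w)]$ and $P_0=P_m=P$, that every restriction $F_{w(k)}\colon P_{k-1}\to P_k$ is a bijection and that $\phi|_P$ is a permutation of $P$. Consequently
\begin{equation*}
  (\phi|_P)^{-1}=F_{w(1)}^{-1}\circ F_{w(2)}^{-1}\circ\cdots\circ F_{w(m)}^{-1},
\end{equation*}
where each $F_{w(k)}^{-1}\colon P_k\to P_{k-1}$ is the inverse of the corresponding bijection. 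Since $[\Fy,\rev(w)]=F_{w_1}\circ\cdots\circ F_{w_m}$, the statement will follow once I show that each of these inverse bijections equals $F_{w(k)}$ itself on periodic points.

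First I would isolate the key $\F_2$ input. For a fixed vertex $i$, the map $F_i$ leaves every coordinate except the $i$-th unchanged and acts on each fiber (the two states agreeing off coordinate $i$) as a self-map of $\{0,1\}$. Every self-map of a two-element set satisfies $g^3=g$, so $F_i^3=F_i$ holds globally, whence $F_i^2=\mathrm{id}$ on $\mathrm{im}(F_i)$; that is, $F_i$ restricted to its image is an involution. Because the update order $\Shift_k(w)$ defining $\phi_k$ has $w(k)$ as its last letter, $F_{w(k)}$ is the outermost factor of $\phi_k$, and therefore $P_k\subseteq\mathrm{im}(F_{w(k)})$, giving $F_{w(k)}^2=\mathrm{id}$ on $P_k$. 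Combined with the bijectivity of $F_{w(k)}\colon P_{k-1}\to P_k$, this is meant to force $F_{w(k)}$ to exchange $P_{k-1}$ and $P_k$ involutively, so that $F_{w(k)}^{-1}=F_{w(k)}$ there; substituting into the formula above would collapse $(\phi|_P)^{-1}$ exactly into $[\Fy,\rev(w)]|_P$.

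An equivalent and perhaps more transparent route is to verify $[\Fy,\rev(w)]\circ[\Fy,w]=\mathrm{id}$ directly on $P$ by tracking a periodic trajectory through the palindromic word $w\cdot\rev(w)$ and cancelling the two innermost occurrences of $F_{w_m}$, then of $F_{w_{m-1}}$, and so on outward; each cancellation is one application of the involution property above. Since $\phi|_P$ is already known to be a bijection, establishing this single identity is enough to identify $[\Fy,\rev(w)]|_P$ with $(\phi|_P)^{-1}$, and reflecting the update order then reverses every periodic orbit, giving the advertised (functional) cycle equivalence for $K=\F_2$.

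The step I expect to be the genuine obstacle is the involution claim itself: that $F_{w(k)}$ acts on the relevant periodic points as a fiber-bijection rather than as a collapsing (constant) map, equivalently that $F_{w(k)}$ carries $P_k$ back onto $P_{k-1}$. Over $\F_2$ a bijective fiber map is automatically an involution (it is the identity or the transposition of $\{0,1\}$), so everything hinges on ruling out the two-to-one case at a point lying on a periodic orbit; the termwise cancellation, and hence the whole identity, breaks down at any step where such collapsing occurs. This is precisely where the finiteness and bijectivity furnished by Theorem~\ref{thm:shift} and the restriction to $\F_2$ must both be used essentially — over a larger state set a single-coordinate map need not be an involution even when bijective, which is exactly why $\rev$ enters only for $K=\F_2$. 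I would therefore devote the bulk of the argument to showing that a local map cannot be two-to-one at a periodic point, treating this reversibility-along-orbits statement as the crux on which the proposition stands or falls.
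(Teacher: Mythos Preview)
Your strategy is the same mechanism the paper invokes: show that each local map $F_{w(k)}$ acts as an involution on the relevant states, so that reversing the order of composition inverts the map. The paper's one-sentence justification asserts outright that for every vertex function $f_i$ and every fixed neighbor tuple $\y'[i]$ the single-variable restriction $y_i\mapsto f_i(y_i;\y'[i])$ is a bijection of $\F_2$. You are more careful: you only claim $F_i^2=\mathrm{id}$ on $\mathrm{im}(F_i)$, derived from $F_i^3=F_i$, and then try to place the periodic sets $P_{k-1}$ inside that image.

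The crux you isolate is genuine, and it is not merely a difficulty to be overcome: it is false. The paper's global fiber-bijectivity claim already fails for any $f_i$ that ignores its own coordinate, and your refined requirement $P_{k-1}\subseteq\mathrm{im}(F_{w(k)})$ fails for the same reason. On the edge $Y=\{1,2\}$ with $f_1(y_1,y_2)=y_2$ and $f_2(y_1,y_2)=1+y_1$, one computes
\[
\Per[\Fy,(1,2)]=\{(0,1),(1,0)\},\qquad \Per[\Fy,(2,1)]=\{(0,0),(1,1)\},
\]
so $[\Fy,\rev(w)]$ does not even carry $P=\Per[\Fy,w]$ into itself, and the identity $\bigl([\Fy,w]\bigl|_P\bigr)^{-1}=[\Fy,\rev(w)]\bigl|_P$ cannot hold as written. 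The two-to-one collapse you worried about occurs exactly here: $(0,1)\in P_0$ lies outside $\mathrm{im}(F_1)=\{(0,0),(1,1)\}$, so $F_1^2(0,1)=(1,1)\neq(0,1)$ and the termwise cancellation breaks at the first step. Thus the obstacle you flag is not a gap to be filled but a counterexample to the statement; the paper's sketch glosses over precisely the same point. What does survive is the weaker conclusion actually used downstream, namely cycle equivalence of $[\Fy,w]$ and $[\Fy,\rev(w)]$ over $\F_2$: in the example both systems have a single $2$-cycle, and in general one expects an orbit-structure isomorphism rather than a literal inverse on the same point set.
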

This result follows since for each vertex function $f_i$ the
restriction $f_i(-;\y'[i]) \colon \F_2 \longrightarrow \F_2$ is a
bijection for each fixed choice of $\y'[i]$. There are only two such
maps: the identity map $y_i\mapsto y_i$ and the map $y_i\mapsto
1+y_i$. From this it follows that composing the two maps
in~\eqref{eq:inverse} in either order gives the identity map,
see~\cite{Mortveit:01a}. The next proposition is now clear:
\begin{proposition}
  \label{prop:oequiv}
  For $K=\F_2$ the \sds maps $[\Fy,w]$ and $[\Fy,\rev(w)]$ are cycle
  equivalent.
\end{proposition}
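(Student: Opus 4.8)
The plan is to derive Proposition~\ref{prop:oequiv} as a direct consequence of Proposition~\ref{prop:inverse}, which I may assume. Write $\phi=[\Fy,w]$ and $\psi=[\Fy,\rev(w)]$, and let $P=\Per(\phi)$. The content of Proposition~\ref{prop:inverse} is that $\psi|_P = (\phi|_P)^{-1}$; in particular $\phi$ and $\psi$ have the same set of periodic points, since the periodic points of a bijection and its inverse coincide (both equal $P$, and on $P$ the two maps are mutually inverse bijections, so $\Per(\psi)=\Per(\phi)=P$). The task is therefore to exhibit a bijection $h\colon\Per(\phi)\to\Per(\psi)$ satisfying the intertwining relation~\eqref{eq:conj}, namely $\psi|_P\circ h = h\circ\phi|_P$.

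First I would observe that cycle equivalence of a bijection with its own inverse is a general phenomenon: if $g\colon P\to P$ is any bijection on a finite set, then $g$ and $g^{-1}$ are cycle equivalent. The periodic orbits of $g$ are its cycles, and $g^{-1}$ has exactly the same cycles traversed in the opposite direction, so the two directed-graph structures on $P$ are isomorphic. Concretely, I would take $h$ to be a conjugating map realizing this. The cleanest choice is $h=\phi|_P$ itself (equivalently any power of $\phi|_P$), since then the required identity $\psi|_P\circ\phi|_P = \phi|_P\circ\phi|_P$ reduces, using $\psi|_P=(\phi|_P)^{-1}$, to $\mathrm{id}_P\circ\phi|_P = \phi|_P = \phi|_P\circ\mathrm{id}_P$ after one more application — more directly, substituting $\psi|_P=(\phi|_P)^{-1}$ and $h=\phi|_P$ into~\eqref{eq:conj} gives $(\phi|_P)^{-1}\circ\phi|_P = \phi|_P\circ(\phi|_P)^{-1}$, i.e. $\mathrm{id}_P=\mathrm{id}_P$, which holds trivially. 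Since $\phi|_P$ is a bijection of $P$ onto itself and $\Per(\phi)=\Per(\psi)=P$, this $h$ is a legitimate bijection between the two sets of periodic points.

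The key steps, in order, are: (1) invoke Proposition~\ref{prop:inverse} to get $\psi|_P=(\phi|_P)^{-1}$ and to conclude $\Per(\psi)=P$; (2) choose $h=\phi|_P$, which is a bijection $P\to P=\Per(\phi)\to\Per(\psi)$; and (3) verify that~\eqref{eq:conj} collapses to the identity $\mathrm{id}_P=\mathrm{id}_P$ upon substituting $\psi|_P=(\phi|_P)^{-1}$. The verification in step~(3) is the routine calculation $\psi|_P\circ h=(\phi|_P)^{-1}\circ\phi|_P=\mathrm{id}_P$ and $h\circ\phi|_P=\phi|_P\circ\phi|_P$; note these are equal precisely because precomposing the first with the automorphism $\phi|_P$ matches — so in fact the cleanest formulation is to take $h=\mathrm{id}_P$, giving $\psi|_P=(\phi|_P)^{-1}$ versus $\phi|_P$, which is the statement that identity conjugation does \emph{not} work and one genuinely needs the reversal of orbits. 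I would therefore settle on $h=\phi|_P$ and present step~(3) carefully.

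I do not expect a serious obstacle here, since the statement says ``is now clear'' and the real work lives in Proposition~\ref{prop:inverse}. The only subtlety worth flagging is the one just noted: the naive choice $h=\mathrm{id}$ fails because $\phi|_P$ and its inverse are equal only when every periodic orbit has length at most $2$, so the proof must use a nontrivial conjugacy. The essential point to get right is that a finite bijection and its inverse are always cycle equivalent as directed graphs on the periodic set — reversing every cycle's orientation is realized by the graph isomorphism induced by $h=\phi|_P$ — and that Proposition~\ref{prop:inverse} is exactly what certifies $\psi|_P=(\phi|_P)^{-1}$ in the $\F_2$ setting.
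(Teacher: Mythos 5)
Your overall strategy --- reducing to Proposition~\ref{prop:inverse} and then arguing that a finite bijection is cycle equivalent to its inverse --- is exactly what the paper intends (it dismisses the step as ``now clear''), and your treatment of $\Per(\psi)=\Per(\phi)=P$ is acceptable. However, the conjugating map you settle on, $h=\phi|_P$, does not work, and the computation you offer to verify it contains an error. Substituting $h=\phi|_P$ and $\psi|_P=(\phi|_P)^{-1}$ into~\eqref{eq:conj} gives $\psi|_P\circ h=(\phi|_P)^{-1}\circ\phi|_P=\mathrm{id}_P$ on the left, but $h\circ\phi|_P=(\phi|_P)^{2}$ on the right --- not $\phi|_P\circ(\phi|_P)^{-1}$ as you wrote. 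So the relation collapses to $(\phi|_P)^{2}=\mathrm{id}_P$, which fails whenever some periodic orbit has length greater than $2$ (\eg the $\nor$-\sds over $\Star_n$ has an orbit of length $3$ by Corollary~\ref{cor:star}). More structurally, conjugation by $\phi|_P$, or by any power of it, centralizes $\phi|_P$ and hence also $(\phi|_P)^{-1}$, so no such choice can carry $(\phi|_P)^{-1}$ to $\phi|_P$ unless the two were already equal; and $\phi|_P$ does not ``reverse each cycle's orientation'' --- it rotates along each cycle.

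The proposition is still true and the repair is short: a permutation of a finite set is always conjugate to its inverse, since the two have the same cycle type. Concretely, choose a base point $x_O$ on each periodic orbit $O$ of $\phi$ and set $h\bigl(\phi^{i}(x_O)\bigr)=\phi^{-i}(x_O)$. This is a well-defined involution of $P$ that reverses each cycle, and one checks directly that $h\circ(\phi|_P)^{-1}\circ h=\phi|_P$, equivalently $\psi|_P\circ h=h\circ\phi|_P$, which is~\eqref{eq:conj}. With that replacement your argument is complete and coincides with the paper's implicit one. The only other point worth stating explicitly is the one you gestured at: $P\subseteq\Per(\psi)$ because $\psi|_P=(\phi|_P)^{-1}$ is a bijection of $P$ onto itself, and the reverse inclusion follows by applying Proposition~\ref{prop:inverse} to $\rev(w)$ together with $\rev(\rev(w))=w$.
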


Thus, for any $g\in G = C_m$ the \sds maps $[\Fy, w]$ and
$[\Fy, g\cdot w]$ are cycle equivalent, where $|w|=m$. If $K=\F_2$
the same statement holds for $G = D_m$.
We now have the following situation: elements $\pi$ and $\pi'$ with
$[\pi]_Y \ne [\pi']_Y$ generally give rise to functionally
non-equivalent \sds maps. If there exists $g\in G$,
$\bar{\pi}\in[\pi]_Y$ and $\bar{\pi}'\in[\pi']_Y$ such that
$g\cdot\bar{\pi} = \bar{\pi}'$, then the classes $[\pi]_Y$ and
$[\pi']_Y$ give rise to cycle equivalent \sds maps.

Let $\Star_n$ be the graph with vertex set $\vset[\Star_n] =
\{0,1,\dots,n\}$ and edge set $\eset[\Star_n]=\bigl\{\{0,i\}\mid 1\leq
i\leq n\bigr\}$. As a particular example we have:
\begin{corollary}
  \label{cor:star}
  Let $Y = \Star_n$ with $n\ge2$. For a fixed sequence $\Fy$ of
  $\Aut(Y)$-invariant $Y$-local maps all permutation \sds maps of the
  form $[\Fy, \pi]$ are cycle equivalent. Any \sds map of the form
  $[\NOR_Y,\pi]$ with $\pi \in S_Y$ has precisely one periodic orbit
  of size three, and $2^{n-1}-1$ periodic orbits of size two.
\end{corollary}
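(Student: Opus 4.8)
The plan is to prove Corollary~\ref{cor:star} in two parts: first the cycle equivalence of all permutation \sds maps, then the explicit periodic orbit count for the $\nor$ rule.

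\textbf{Part 1: Cycle equivalence of all permutation \sds maps.}
The key observation is that $\Aut(\Star_n) = S_n$ acting on the leaves $\{1,\dots,n\}$, fixing the center $0$. I would first argue that for any two update orders $\pi,\pi'\in S_{\Star_n}$, one can be carried to the other by a combination of $(i)$ a cyclic shift (Theorem~\ref{thm:shift}), $(ii)$ a graph automorphism (dynamical equivalence via~\eqref{eq:gamma*pi}), and $(iii)$ moves through the update-graph equivalence $\sim_Y$ (functional equivalence). Concretely, since the only edges are $\{0,i\}$, any two leaves commute, so in the update graph $U(\Star_n)$ adjacent leaves in a word may be freely transposed. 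Thus up to $\sim_Y$ the only datum that matters is the position of the center $0$ relative to the set of leaves — equivalently, which leaves precede $0$ and which follow it. A cyclic shift lets me rotate the center $0$ to the front of the word; once $0$ is first, all leaves follow it and commute, so every such order is $\sim_Y$-equivalent to the single class ``$0$ then all leaves.'' Hence all permutation update orders collapse to one cycle equivalence class, and the conclusion holds for \emph{any} fixed $\Aut(Y)$-invariant $\Fy$. The main subtlety here is handling the $\sim_Y$ reduction carefully: I must confirm that after a shift placing $0$ first, the remaining word is a permutation of the leaves in which every adjacent pair is non-adjacent in $\Star_n$, so the whole block is a single $\sim_Y$-class.

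\textbf{Part 2: The periodic orbit count for $\NOR_Y$.}
By Part 1 it suffices to analyze one representative, say $\pi = (0,1,2,\dots,n)$, so the map is $F_n\circ\cdots\circ F_1\circ F_0$. I would compute this map explicitly. Applying $F_0$ first sets $y_0 \mapsto \nor(y_0,y_1,\dots,y_n)$, which is $1$ iff all of $y_0,\dots,y_n$ are $0$. Then each leaf updates as $y_i \mapsto \nor(y_i,y_0^{\text{new}})=(1+y_i)(1+y_0^{\text{new}})$, depending only on the freshly updated center. The plan is to case-split on the post-update center value: if the new center is $1$ (which happens exactly from the all-zero state), every leaf is sent to $0$; if the new center is $0$, every leaf $y_i\mapsto 1+y_i$ is complemented. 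Tracking these two cases over iterations should reveal the orbit structure directly.

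\textbf{Identifying the periodic orbits.}
From the case analysis I expect the dynamics to sort configurations by the center value. I would show the all-zero state enters a $3$-cycle: $(0,\mathbf{0})\mapsto(1,\mathbf{0})\mapsto$ a state with center $0$ and all leaves $1$, then back, giving the unique period-$3$ orbit. For the remaining states, I would show that whenever the new center is $0$ (the generic case, center fixed at $0$ while leaves complement), applying the map twice returns the original configuration, producing $2$-cycles that pair $(0,y_1,\dots,y_n)$ with $(0,1+y_1,\dots,1+y_n)$. Counting: the center-$0$ leaf configurations not involved in the $3$-cycle number $2^n - 2$, and since complementation pairs them into $2$-cycles with no fixed points (complementing all $n\ge 2$ leaves changes the state), this yields $(2^n-2)/2 = 2^{n-1}-1$ orbits of size two. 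The main obstacle will be verifying cleanly that every periodic point has center value $0$ except within the single $3$-cycle — that is, ruling out stray fixed points or spurious cycles — and confirming that transient states (those that reach all-zero-leaf-but-center-$1$ configurations) correctly feed into, rather than inflate, the periodic orbit count.
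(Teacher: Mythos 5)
Your proposal is correct, but it reaches the conclusion by a route that differs from the paper's in both halves. For the first claim, the paper groups update orders by the position of vertex $0$ using the action of $\Aut(\Star_n)\cong S_n$ on $\SYmod$ (which is where the $\Aut(Y)$-invariance hypothesis enters, via~\eqref{eq:gamma*pi}), and then uses Theorem~\ref{thm:shift} to connect the resulting classes. You instead shift $0$ to the front and observe that the leaves pairwise commute in $U(\Star_n)$, so every order with $0$ first lies in a single $\sim_Y$-class; combined with Theorem~\ref{thm:shift} and the fact that functional equivalence refines cycle equivalence, this collapses everything to one class. Your worry about the $\sim_Y$ reduction is unfounded: no two leaves are adjacent in $\Star_n$, so adjacent transpositions of leaves generate all reorderings of the block following $0$. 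Notably, your argument never uses $\Aut(Y)$-invariance, so it proves the slightly stronger statement that the conclusion holds for \emph{any} fixed $\Fy$; the paper's automorphism argument is the one that generalizes to graphs where update orders cannot all be shifted into a common $\sim_Y$-class. For the second claim, the paper simply defers to inspection of the phase spaces listed in~\cite{Mortveit:01a}, whereas you compute the map directly: with $\pi=(0,1,\dots,n)$ the center updates to $1$ exactly from the all-zero state, and otherwise every leaf is complemented while the center lands on $0$; this yields the $3$-cycle $(0,\mathbf{0})\mapsto(1,\mathbf{0})\mapsto(0,\mathbf{1})\mapsto(0,\mathbf{0})$, pairs the remaining $2^n-2$ center-zero states into $2^{n-1}-1$ complementation $2$-cycles (fixed-point-free since $n\ge 2$), and discards all other center-one states as non-periodic because they are not in the image of the map. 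Your computation is self-contained where the paper's is not, so it is, if anything, the more complete of the two.
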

\begin{proof}
  We have $\Aut(\Star_n) \cong S_n$ since the automorphisms of
  $\Star_n$ are precisely the elements of $S_Y$ that fix the vertex
  $0$. An orbit of $\Aut(\Star_n)$ in $\SYmod$ contains all
  equivalence classes $[\pi]_Y$ for which the position of $0$ in $\pi$
  coincide, $i$ say. Thus for $0\le i \le n$ all permutations that
  have vertex $0$ in the $i^{\rm th}$ position give rise to
  dynamically equivalent \sds maps. Pick $\pi = (0,1,2,\ldots,n)$. By
  Corollary~\ref{prop:oequiv}, all permutations that are shifts of
  $\pi$ give cycle equivalent \sds maps. The second part now follows
  by inspection of one of the possible phase spaces. They are all
  listed in~\cite{Mortveit:01a}, but without enumerations of periodic
  orbits.
\end{proof}

%% ------------------------------------------------------------
\section{Combinatorial Constructions for Cycle Equivalence}
\label{sec:foundation}

%% ------------------------------------------------------------

\subsection{Neutral Networks}

In the remainder of this paper we will only consider permutation
update orders, although it is not hard to see how this can be extended
to systems with general word update orders. To start, we define two
graphs over $\SYmod$ whose connected components give rise to cycle
equivalent \sdss for a fixed graph $Y$ and a fixed sequence $\Fy$.
Since cycle equivalence is a coarsening of functional equivalence, it
is natural to construct these graphs using $\SYmod$ as vertex set
rather than $S_Y$.

Let $C(Y)$ and $D(Y)$ be the graphs defined by
\begin{alignat*}{3}
  \vset[C(Y)] &= \SYmod ,&\qquad 
  \eset[C(Y)] &= \bigl\{ \{[\pi]_Y,[\Shift_1(\pi)]_Y\} &\;\mid\;&\pi\in
  S_Y\bigr\}\;,\\ 
  \vset[D(Y)] &= \SYmod ,&\qquad 
  \eset[D(Y)] &=  \bigl\{\{[\pi]_Y,\,[\rev(\pi)]_Y\}&\;\mid \;&\pi\in
  S_Y\bigr\} 
  \cup \eset[C(Y)] \;.
\end{alignat*}

Define $\kappa(Y)$ and $\delta(Y)$ to be the number of connected
components of $C(Y)$ and $D(Y)$, respectively. It is clear that $C(Y)$
is a subgraph of $D(Y)$, and that $\delta(Y)\leq \kappa(Y)$. By
Theorem~\ref{thm:shift}, $\kappa(Y)$ is a general upper bound for the
number of different \sds cycle equivalence classes obtainable through
update order variations. For $K=\F_2$ it follows from
Proposition~\ref{prop:inverse} that $\delta(Y)$ is an upper bound as
well. 

\begin{example}
  As in Example~\ref{ex:running1} let $Y=\Circle_4$. The permutation
  update graph $U(\Circle_4)$ has $\alpha(\Circle_4) = 14$ connected
  components as shown in Figure~\ref{fig:uc4}.
  \begin{figure}[ht]
    \centerline{\includegraphics[width=0.6\textwidth]{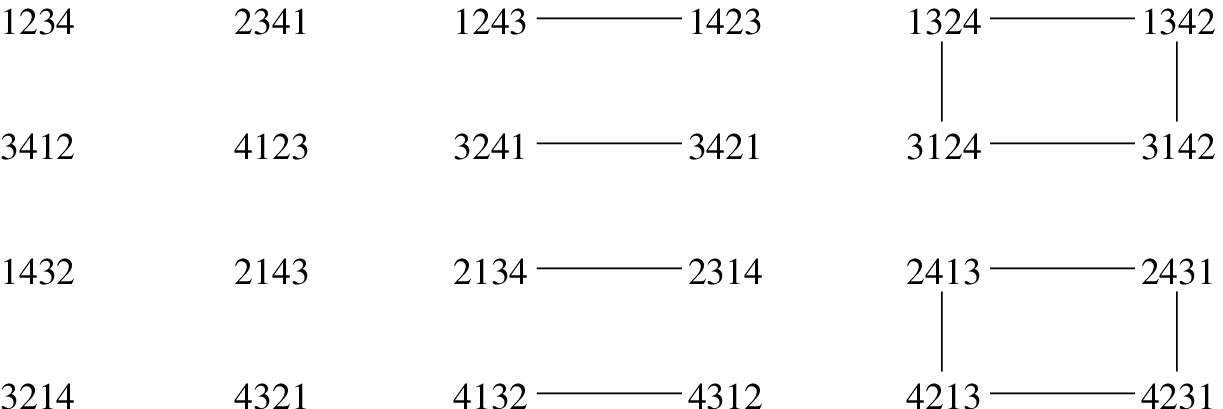}}
    \caption{The update graph $U(\Circle_4)$.}
    \label{fig:uc4} 
  \end{figure}
  The graphs $C(\Circle_4)$ and $D(\Circle_4)$ are shown in
  Figure~\ref{fig:circ4} where the dashed lines represent the edges in
  $\eset[D(\Circle_4)] \setminus \eset[C(\Circle_4)]$.
  \begin{figure}[!htbp]
    \begin{center}
      \includegraphics[width=.7\textwidth]{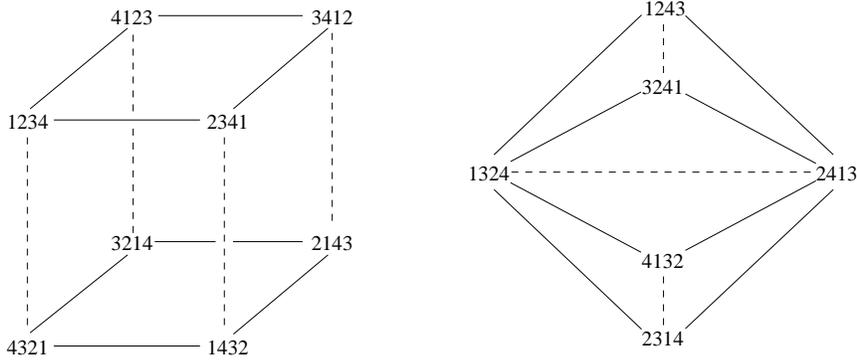}
      \caption{The graphs $C(\Circle_4)$ and $D(\Circle_4)$. The
	dashed lines are edges in $D(\Circle_4)$ but not in
	$C(\Circle_4)$.}
      \label{fig:circ4}
    \end{center}
  \end{figure}
  The vertices in Figure~\ref{fig:circ4} are labeled by a permutation
  in the corresponding equivalence class in $\SYmod$. The vertices of
  the cube-shaped component are all singletons in $\SYmod$. The
  equivalence classes $[1324]_{\Circle_4}$ and $[2413]_{\Circle_4}$
  both consist of four permutations, while the remaining four vertices
  on that component are equivalence classes that contain precisely two
  permutations. Clearly, $\kappa(\Circle_4)=3$ and
  $\delta(\Circle_4)=2$.
\end{example}

%% ------------------------------------------------------------

The following result gives insight into the how $\kappa$- and
$\delta$-equivalent permutations are distributed across the vertices
of the update graph $U(Y)$. 
%component structure of the graphs~$C(Y)$ and~$D(Y)$.
\begin{proposition}
  \label{prop:C_n-D_n}
  Let $Y$ be a connected graph on $n$ vertices and let $g,g'\in C_n$
  with $g \ne g'$. Then $[g\cdot\pi]_Y \ne [g'\cdot\pi]_Y$. If
  $g,g'\in D_n$ with $g\ne g'$ then $[g\cdot\pi]_Y=[g'\cdot\pi]_Y$
  holds if and only if $Y$ is bipartite.
\end{proposition}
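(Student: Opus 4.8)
The plan is to push the whole problem through the bijection $f_Y\colon\SYmod\to\Acyc(Y)$ of~\eqref{eq:abij}, identifying each class $[\pi]_Y$ with the acyclic orientation $O_Y^\pi$, and then to translate the two generators of $D_n$ into operations on orientations. Writing $A_k=\{\pi_1,\dots,\pi_k\}$ for the length-$k$ prefix of $\pi$, a direct comparison of the linear orders $<_\pi$ and $<_{\Shift_k(\pi)}$ shows that the edges whose orientation differs between $O_Y^\pi$ and $O_Y^{\Shift_k(\pi)}$ are exactly the edges of the cut $\bigl(A_k,\vset[Y]\setminus A_k\bigr)$; for $k=1$ this is precisely the source-to-sink operation at the source $\pi_1$ mentioned in the introduction. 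Dually, $\rev$ reverses the relative order of every pair, so $O_Y^{\rev(\pi)}=\overline{O_Y^\pi}$ is the orientation with every edge reversed. These two dictionary entries are the entire engine of the argument.

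For the first statement it suffices to prove $[\Shift_k(\pi)]_Y\neq[\pi]_Y$ for $0<k<n$: a coincidence $[\sigma^i\cdot\pi]_Y=[\sigma^j\cdot\pi]_Y$ with $0\le i<j<n$ reduces, upon replacing $\pi$ by $\Shift_i(\pi)$, to the case $k=j-i$. By the dictionary, $[\Shift_k(\pi)]_Y=[\pi]_Y$ holds exactly when no edge crosses the cut $\bigl(A_k,\vset[Y]\setminus A_k\bigr)$; since both $A_k$ and its complement are nonempty for $0<k<n$, connectedness of $Y$ forces a crossing edge, so the classes differ and the map $g\mapsto[g\cdot\pi]_Y$ is injective on $C_n$.

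For the second statement I would first invoke the first statement, applied to $\pi$ and then to $\rev(\pi)$, to exclude a coincidence between two distinct rotations or between two distinct reflections; hence any coincidence among distinct elements of $D_n$ must equate a rotation $\Shift_i(\pi)$ with a reflection $\Shift_j(\rev(\pi))$. Unwinding the dictionary, $O_Y^{\Shift_i(\pi)}$ flips, relative to $O_Y^\pi$, the edges of the cut cut out by $A_i$, whereas $O_Y^{\Shift_j(\rev(\pi))}$ flips the \emph{complementary} set of edges relative to the cut cut out by the prefix $A'$ of $\rev(\pi)$; equality therefore says that every edge lies in exactly one of these two cuts. Passing to the indicator $c\colon\vset[Y]\to\F_2$, $c=\mathbf{1}_{A_i}+\mathbf{1}_{A'}$, this crossing condition is exactly $c(u)\neq c(v)$ on every edge, i.e.\ $c$ is a proper $2$-colouring, which exists iff $Y$ is bipartite. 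Conversely, given a bipartition $\vset[Y]=X_1\sqcup X_2$, choosing $\pi$ so that $X_1$ is an initial block makes the cut $(X_1,X_2)$ contain every edge, whence $O_Y^{\Shift_{\card{X_1}}(\pi)}=\overline{O_Y^\pi}=O_Y^{\rev(\pi)}$, so the rotation $\sigma^{\card{X_1}}$ and the reflection $\rho$ coincide on $[\pi]_Y$.

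The bookkeeping that turns a cyclic shift of the word into a cut-reversal of the orientation is routine but indispensable; the genuine obstacle is the bipartite equivalence, and the clean device is the $\F_2$-valued indicator $c$, which converts ``every edge crosses exactly one of two prefix cuts'' into properness of the colouring $c$. I would also make the quantifier reading explicit, since it is forced by this argument: for connected $Y$ the $\F_2$ computation shows that \emph{no} coincidence $[g\cdot\pi]_Y=[g'\cdot\pi]_Y$ with $g\neq g'$ in $D_n$ can occur for any $\pi$ when $Y$ is not bipartite, while for bipartite $Y$ such a coincidence is realised by listing a colour class as an initial block of $\pi$.
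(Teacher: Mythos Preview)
Your argument is correct and handled with care. The paper does not actually give a proof here; it only says the proof ``is by contradiction'' and cites \cite{Macauley:08c}. Your approach---pushing everything through the bijection $f_Y$ to $\Acyc(Y)$, observing that $\Shift_k$ reverses precisely the edges of the cut $(A_k,\vset[Y]\setminus A_k)$ while $\rev$ reverses every edge, and then reading off the conclusions---is the natural direct argument, and the ``contradiction'' flavour is visible in your first part (assume $[\Shift_k(\pi)]_Y=[\pi]_Y$, deduce the cut is empty, contradict connectedness).

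Two remarks worth recording. First, your $\F_2$-indicator $c=\mathbf{1}_{A_i}+\mathbf{1}_{A'}$ is exactly the right device for the rotation-versus-reflection case, and your reduction to that case (by applying the $C_n$ statement first to $\pi$ and then to $\rev(\pi)$) is clean. Second, you are right to make the quantifier reading explicit: your forward direction shows that for non-bipartite $Y$ no coincidence occurs for \emph{any} $\pi$, while your converse exhibits a coincidence for a \emph{particular} $\pi$ (one listing a colour class as an initial block). That is the correct reading; indeed, for $Y=\Circle_4$ and $\pi=(1,2,3,4)$ all eight classes $[g\cdot\pi]_Y$, $g\in D_4$, are singletons and pairwise distinct, so a coincidence cannot be guaranteed for every $\pi$ even when $Y$ is bipartite. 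Your closing paragraph already anticipates this, which is good; the paper's remark about $\bigcard{\{[g\cdot\pi]_Y:g\in D_n\}}=2n-1$ should likewise be read existentially in $\pi$.
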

The proof, which can be found in~\cite{Macauley:08c}, is by
contradiction. We remark that if $Y$ is bipartite if and only if
$\card{\{[g\cdot\pi]_Y : g \in D_n\}}=2n-1$. 

\subsection{Source-Sink Operations and Reflections of Acyclic Orientations}

In this section we show how the component structure of $C(Y)$ is
precisely captured through \emph{source-sink} operations on acyclic
orientations. The bijection in~\eqref{eq:abij} identifies $[\pi]_Y$
with $O_Y^\pi\in\Acyc(Y)$. For any $\pi\in[\pi']_Y$, the orientation
$O^{\Shift_1(\pi)}_Y$ is constructed from $O^\pi_Y$ by converting
vertex $\pi_1$ from a source to a sink. Following~\cite{Shi:01} we
call such a conversion a \emph{source-sink operation} or a
\emph{click}. It can be easily verified that this gives rise to an
equivalence relation $\sim_\kappa$ on $\Acyc(Y)$. More precisely, two
orientations $O_Y,O_Y'\in\Acyc(Y)$ where $O_Y$ can be transformed into
$O'_Y$ by a sequence of clicks are said to be $\kappa$-equivalent.
This observation along with Theorem~\ref{thm:shift} shows that
permutations that belong to $\kappa$-equivalent acyclic orientations
induce cycle equivalent \sdss. By construction, the source-sink
operation precisely encodes adjacency in the graph $C(Y)$, and the
connected components are in 1--1 correspondence with the
$\kappa$-equivalence classes. Therefore, the number of equivalence
classes in $\Acyc(Y)$ under the source-sink relation equals
$\kappa(Y)$, and is thus an upper bound for the number of cycle
equivalent permutation \sds maps $[\Fy,\pi]$ for a fixed sequence
$\Fy$.

If $K=\F_2$ then Proposition~\ref{prop:inverse} shows that reflections
of update orders also induce cycle equivalent \sdss. On the level of
acyclic orientations this corresponds to reversing all orientations.
Through the bijection~\eqref{eq:abij} this identifies $O_Y^{\pi}$ with
the reverse orientation $O_Y^{\rev(\pi)}$, the unique orientation that
satisfies $O_Y^{\pi}(\{i,j\})\neq O_Y^{\rev(\pi)}(\{i,j\})$ for every
$\{i,j\}\in\eset[Y]$. If two acyclic orientations are related by a
sequence of source-sink operations and reflections, then they are said
to be \emph{$\delta$-equivalent}. 

The notion of $\kappa$- and $\delta$-equivalence carries over
naturally to update orders as well. Two update orders in $S_Y$
belonging to $\sim_Y$ classes on the same connected component in
$C(Y)$ [resp.~$D(Y)$] are called \emph{$\kappa$-equivalent}
[resp.~\emph{$\delta$-equivalent}]. For two $\kappa$-equivalent update
orders $\pi$ and $\pi'$, there is a sequence of adjacent non-edge
transpositions and cyclic shifts that map $\pi$ to $\pi'$. This is
simply a consequence of the definition of $\SYmod$ and $C(Y)$.

We remark that from here there is a close connection to the structure
of conjugacy classes of Coxeter elements, something we explain more in
Section~\ref{sec:summary}.  The case of $K=\F_2$ and reflections does
not seem to play any role in Coxeter theory.

%% ------------------------------------------------------------

\section{Enumeration for $\kappa(Y)$ and $\delta(Y)$}
\label{sec:counting}

It is not difficult to show that $\delta(Y)$ may be characterized in
terms of $\kappa(Y)$.

\begin{proposition}[\cite{Macauley:08c}]
  \label{prop:delta}
  Let $Y$ be a connected graph. If $Y$ is not bipartite then
  $\delta(Y)=\tfrac{1}{2}\kappa(Y)$. If $Y$ is bipartite then
  $\delta(Y)=\tfrac{1}{2}(\kappa(Y)+1)$.
\end{proposition}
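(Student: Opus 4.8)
The plan is to reduce the statement to the combinatorics of reversal acting on acyclic orientations, and then to count the orientation classes fixed by reversal.

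First I would record the reduction. By the source--sink discussion preceding the statement, the components of $C(Y)$ are exactly the $\kappa$-equivalence classes of $\Acyc(Y)$ under clicks, and reflection of the update order corresponds to the global reversal $O\mapsto\bar O$ that flips every edge. Since a click and its inverse (a sink-to-source conversion) are both moves within a $\kappa$-class, reversal carries $\kappa$-equivalent orientations to $\kappa$-equivalent orientations, so it descends to a well-defined involution $r$ on the set of $\kappa$-classes. The extra edges of $D(Y)$ are precisely the reflection edges, so the components of $D(Y)$ are exactly the orbits of $r$. Writing $F$ for the number of $r$-fixed classes, the involution pairs the remaining $\kappa(Y)-F$ classes, whence $\delta(Y)=F+\tfrac12(\kappa(Y)-F)=\tfrac12(\kappa(Y)+F)$. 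It therefore suffices to prove $F=0$ when $Y$ is not bipartite and $F=1$ when $Y$ is bipartite.

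Next I would introduce a click-invariant. For a cycle $Z$ of $Y$ with a fixed traversal direction, let $\nu_Z(O)$ be the number of edges of $Z$ oriented by $O$ along the traversal minus the number oriented against it. A local check at the clicked vertex shows $\nu_Z$ is unchanged by a click (the two cycle-edges at a source swap their along/against status, with net effect zero), so $\nu_Z$ is constant on each $\kappa$-class; reversal negates it, $\nu_Z(\bar O)=-\nu_Z(O)$. Hence if a class $k$ is $r$-fixed, then $O\simkappa\bar O$ for $O\in k$ gives $\nu_Z(O)=\nu_Z(\bar O)=-\nu_Z(O)$, so $\nu_Z(O)=0$ for every cycle $Z$. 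In particular every cycle has equally many forward and backward edges, hence even length, so $Y$ is bipartite. This already yields $F=0$ in the non-bipartite case.

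For the bipartite case I would exhibit one fixed class and then show there is no other. Write $\vset[Y]=V_1\sqcup V_2$ and let $O^{*}$ orient every edge from $V_1$ to $V_2$; since $V_1$ is exactly its set of sources, clicking each vertex of $V_1$ once (each is still a source when reached, as its neighbours lie in the unclicked part $V_2$) turns $O^{*}$ into $\bar O^{*}$, so $O^{*}\simkappa\bar O^{*}$ and $F\ge1$. For uniqueness I would show every $O$ with $\nu_Z(O)=0$ for all $Z$ satisfies $O\simkappa O^{*}$: the edges on which $O$ and $O^{*}$ disagree annihilate all cycle-sums and so form an edge cut, and I would drive this disagreement set to the empty set by repeatedly clicking a source in $V_2$ or un-clicking a sink in $V_1$, each such move flipping only disagreeing edges and strictly lowering the number of disagreements. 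The crux—and the main obstacle—is verifying that, as long as $O\neq O^{*}$ inside this $\nu\equiv0$ class, such a source or sink always exists; this is exactly the point where the hypothesis $\nu_Z\equiv0$ must be used (mere acyclicity is insufficient). Granting it, every $\nu\equiv0$ orientation collapses to $O^{*}$, so $F=1$. Substituting $F$ into $\delta(Y)=\tfrac12(\kappa(Y)+F)$ gives both formulas. An alternative to the descent step is to invoke that the cycle-sums $\{\nu_Z\}$ form a complete invariant for $\kappa$-equivalence (a Pretzel-type reorientation result), which makes the $\nu\equiv0$ class unique at once.
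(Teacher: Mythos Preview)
Your reduction to counting fixed points of the reversal involution $\rev^{*}$ on $\Acyc(Y)/\!\simkappa$ is exactly the route the paper sketches (and then defers to~\cite{Macauley:08c}). The cycle--imbalance invariant $\nu_Z$ is a good choice: click--invariance and sign reversal under $\bar{O}$ are correct, and they cleanly give $F=0$ in the non--bipartite case. Your construction of the fixed bipartite class $[O^{*}]$ is also fine.

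The descent you propose for uniqueness in the bipartite case, however, does not work as stated. You want, whenever $O\neq O^{*}$ and $\nu_Z(O)=0$ for all $Z$, a source in $V_2$ or a sink in $V_1$; but take $Y=P_4$ with $V_1=\{1,3\}$, $V_2=\{2,4\}$, $O^{*}=(1\!\to\!2,\,3\!\to\!2,\,3\!\to\!4)$, and $O=(1\!\to\!2\!\to\!3\!\to\!4)$. Here $\nu_Z\equiv 0$ vacuously, $O\neq O^{*}$, yet the only source is $1\in V_1$ and the only sink is $4\in V_2$. Clicking $1$ actually increases your disagreement count, so the metric does not descend. The obstruction is not that the hypothesis $\nu_Z\equiv 0$ has not been used; the step itself is false.

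Your fallback to the Pretzel--type statement that the family $\{\nu_Z\}_Z$ is a complete $\kappa$--invariant is correct and suffices. If you want a self--contained descent, argue instead via the potential: since $\nu_Z(O)=0$ for all $Z$, there is $g\colon\vset[Y]\to\Z$ with $|g(u)-g(v)|=1$ on every edge and $O$ the gradient orientation of $g$. Click the entire level set $S=\{v:g(v)=\min g\}$ (an independent set of sources); this replaces $g$ by $g+2\cdot\mathbf{1}_S$, preserves parity on each part, and reduces $\max g-\min g$ by one. Iterating drives the range to $1$, at which point $O\in\{O^{*},\bar O^{*}\}$, and you have already shown $O^{*}\simkappa\bar O^{*}$. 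That closes the gap and yields $F=1$.
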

The proof uses the fact that $\rev\colon S_Y\longrightarrow S_Y$
extends to an involution
\begin{equation}
  \label{eq:rho-star}
  \rev^* \colon \Acyc(Y)/\simkappa \longrightarrow 
  \Acyc(Y)/\simkappa \;.
\end{equation}
The result now follows since $\rev^*$ has no fixed points if $Y$
is not bipartite, and has precisely one fixed point if $Y$ is
bipartite. As a corollary, a connected graph is bipartite if and only
if $\kappa(Y)$ is odd. In light of Proposition~\ref{prop:delta} we
focus on the computation of $\kappa(Y)$ in the following. It can be
shown that $\kappa(Y)$ does not depend on bridge edges, \ie, edges not
contained in a cycle.

\begin{proposition}[\cite{Macauley:08b}]
\label{prop:dunion}
If $Y$ is the disjoint union of graphs $Y_1$ and $Y_2$, or if $Y$ is a
graph with $e=\{v,w\}$ a bridge connecting the subgraphs $Y_1$ and
$Y_2$, then
\begin{equation}
\label{eq:k-bridge}
\kappa(Y)=\kappa(Y_1)\kappa(Y_2)\;.
\end{equation}
\end{proposition}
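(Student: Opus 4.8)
The plan is to prove the multiplicative formula by establishing a bijection between the $\kappa$-equivalence classes of acyclic orientations of $Y$ and pairs of such classes for $Y_1$ and $Y_2$. Since $\kappa(Y)$ counts the connected components of $C(Y)$, which by the source-sink characterization in Section~\ref{sec:foundation} equals the number of $\kappa$-equivalence classes in $\Acyc(Y)$, it suffices to work entirely on the level of acyclic orientations under the click operation. The two cases (disjoint union and bridge) should be handled nearly in parallel, since a bridge edge, being contained in no cycle, ought to contribute nothing to the $\kappa$-count.

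\emph{First I would} treat the disjoint union. An orientation of $Y = Y_1 \sqcup Y_2$ is simply a pair $(O_1, O_2)$ with $O_i \in \Acyc(Y_i)$, and it is acyclic iff both components are, so $\Acyc(Y) \cong \Acyc(Y_1) \times \Acyc(Y_2)$. A click at a vertex $v$ of $Y$ affects only the edges incident to $v$, hence acts on exactly one of the two factors. Thus a sequence of clicks transforming $(O_1, O_2)$ into $(O_1', O_2')$ decomposes into clicks acting on the $Y_1$-part and clicks acting on the $Y_2$-part independently, giving $O_i \sim_\kappa O_i'$ in each $\Acyc(Y_i)$. Conversely, any pair of click-sequences on the two factors can be concatenated (in either interleaving) to a click-sequence on $Y$. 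This yields a bijection $\Acyc(Y)/\!\!\sim_\kappa \;\cong\; (\Acyc(Y_1)/\!\!\sim_\kappa) \times (\Acyc(Y_2)/\!\!\sim_\kappa)$, and the formula~\eqref{eq:k-bridge} follows by counting.

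\emph{Next I would} reduce the bridge case to the disjoint-union case. The only new ingredient is the bridge edge $e = \{v, w\}$, which must be oriented either $(v,w)$ or $(w,v)$; since $e$ lies in no cycle, either choice preserves acyclicity, so $\Acyc(Y) \cong \Acyc(Y_1 \sqcup Y_2) \times \{(v,w),(w,v)\}$. The key claim is that the orientation of the bridge is \emph{$\kappa$-irrelevant}: one can always flip $e$ by a sequence of clicks without otherwise altering the $\kappa$-class of the rest of the orientation. Concretely, because $v$ can be made a source (by repeatedly clicking the vertices that precede it, returning the $Y_1$- and $Y_2$-parts to $\kappa$-equivalent orientations) and then clicking $v$ itself reverses $e$, the two bridge orientations always lie in the same $\kappa$-class once the remaining structure is fixed up to $\kappa$-equivalence. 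Hence the bridge contributes a factor of $1$, not $2$, and the $\kappa$-count agrees with that of the disjoint union, giving the same product $\kappa(Y_1)\kappa(Y_2)$.

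\emph{The hard part will be} making the bridge-flipping argument precise: one must show that flipping the bridge via a click at $v$ can be arranged so that it commutes with, or does not disturb, the $\kappa$-class structure within $Y_1$ and $Y_2$ separately. The subtlety is that clicking $v$ touches edges in \emph{both} $Y_1$ and $Y_2$ (wherever $v$ and $w$ sit), so one must verify that the induced changes inside each subgraph can be undone by further clicks confined to that subgraph, leaving a net effect of flipping only the bridge. I would argue this by first reaching a canonical orientation in which $v$ is a source of $Y_1 \cup \{e\}$ and $w$ is a source of $Y_2$ (using that sources can always be produced by clicks in an acyclic orientation, since a finite acyclic orientation has a source whose predecessors can be clicked away), and then observing that a single click at $v$ reverses $e$ while leaving both restricted orientations in their original $\kappa$-classes. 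This canonicalization step is where the main care is needed.
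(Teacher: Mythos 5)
The paper does not actually prove this proposition; it is quoted from~\cite{Macauley:08b} and stated without argument, so there is no in-paper proof to compare yours against. Judged on its own terms, your strategy --- identify $\kappa(Y)$ with $\bigl|\Acyc(Y)/\!\sim_\kappa\!\bigr|$ and build a bijection of click-classes with pairs of click-classes --- is the right one, and your disjoint-union half is complete: a vertex of $Y_1$ is a source of $Y_1\sqcup Y_2$ iff it is a source of $Y_1$, so click sequences restrict and lift factorwise without obstruction.

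The bridge half is correct in outline but the step you flag as hard needs one sharpening. Injectivity of the restriction map requires two things: (i) the two orientations of $e$ lie in the same $\kappa$-class over any fixed $(O_1,O_2)$, and (ii) an arbitrary click sequence inside $Y_1$ (or $Y_2$) lifts to $Y$. Point (ii) is not automatic, because a click at $v$ is admissible in $Y$ only when $e$ is currently oriented $(v,w)$; your phrase ``returning the parts to $\kappa$-equivalent orientations'' leaves the bookkeeping circular, since you would be using $\kappa$-equivalence in $Y_1$ and $Y_2$ to establish $\kappa$-equivalence in $Y$. The clean fix is to prove the bridge-flip in \emph{exact} form: starting from $(O_1,O_2,(v,w))$, click every vertex of $Y_1$ once, in a linear-extension order of $O_1$. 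Each such click is legal in $Y$ (the bridge is untouched until $v$ is clicked, at which point $e=(v,w)$ makes $v$ a genuine source of $Y$), and clicking all of $Y_1$ in linear-extension order restores $O_1$ identically while reversing only $e$. With this lemma, (i) is immediate and (ii) follows because any time the $Y_1$-sequence calls for a click at $v$ with the bridge pointing the wrong way, one first applies the exact flip (which changes nothing else) and then clicks $v$; all other clicks lift verbatim. A small correction along the way: clicking $v$ does not touch edges of $Y_2$ at all --- it reverses edges of $Y_1$ at $v$ together with the bridge --- which is precisely why the exact-flip argument stays confined to one side.
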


For the computation of $\kappa(Y)$ we may therefore assume that $Y$ is
connected, and that every edge is a cycle-edge. Note that for the
empty graph on $n$ vertices $E_n$ we have $\kappa(E_n) = 1$ since
$\alpha(E_n) = 1$.  The following corollary is immediate.

\begin{corollary}
  \label{cor:forest}
  Let $Y$ be a forest. Then $\kappa(Y)=\delta(Y)=1$.
\end{corollary}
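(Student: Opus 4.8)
The plan is to derive Corollary~\ref{cor:forest} directly from the structural results already assembled, since essentially all the work has been done. A forest is a graph in which every edge is a bridge (no edge lies on a cycle), and it is a disjoint union of trees. The strategy is to reduce any forest to a collection of single vertices by repeatedly applying Proposition~\ref{prop:dunion}, which handles both disjoint unions and bridge edges via the product formula $\kappa(Y)=\kappa(Y_1)\kappa(Y_2)$.

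First I would observe that for a single vertex $Y=E_1$ we have $\alpha(E_1)=1$, hence $\kappa(E_1)=1$, using the remark in the excerpt that $\kappa(E_n)=1$ whenever $\alpha(E_n)=1$. Next, I would argue by induction on the number of edges of the forest $Y$. If $Y$ has no edges, then $Y=E_n$ for some $n$, and the excerpt already records $\kappa(E_n)=1$. If $Y$ has at least one edge $e=\{v,w\}$, then since $Y$ is a forest this edge is necessarily a bridge; removing it splits the component containing $e$ into two smaller subgraphs $Y_1$ and $Y_2$, each a forest with strictly fewer edges. Proposition~\ref{prop:dunion} then gives $\kappa(Y)=\kappa(Y_1)\kappa(Y_2)$, and by the inductive hypothesis both factors equal $1$, so $\kappa(Y)=1$.

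Having established $\kappa(Y)=1$, the claim $\delta(Y)=1$ follows from Proposition~\ref{prop:delta}: a forest that is a tree is connected and bipartite, so $\delta(Y)=\tfrac{1}{2}(\kappa(Y)+1)=1$; for a disconnected forest one applies the bipartite case componentwise, or simply notes that with a single $\kappa$-equivalence class there can be at most one $\delta$-equivalence class, forcing $\delta(Y)\le\kappa(Y)=1$, and $\delta(Y)\ge 1$ trivially.

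I do not expect a genuine obstacle here, as the corollary is a straightforward consequence of Propositions~\ref{prop:dunion} and~\ref{prop:delta}. The only point requiring mild care is the bookkeeping for disconnected forests versus trees, since Proposition~\ref{prop:delta} is stated for connected graphs; the cleanest route is to first prove $\kappa(Y)=1$ for all forests by the bridge recursion above, and then invoke the bipartite bound $\delta(Y)\le\kappa(Y)$ together with $\delta(Y)\ge 1$ to conclude $\delta(Y)=1$ without needing connectedness for the $\delta$ computation itself.
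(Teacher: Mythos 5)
Your proof is correct and follows essentially the same route the paper intends: the corollary is stated as immediate from Proposition~\ref{prop:dunion} (every edge of a forest is a bridge, so repeated application reduces to empty graphs with $\kappa(E_n)=1$), and your induction on the number of edges simply makes that reduction explicit. Your handling of $\delta$ via $\delta(Y)\le\kappa(Y)$, which sidesteps the connectedness hypothesis of Proposition~\ref{prop:delta}, is also the right call.
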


>From Corollary~\ref{cor:forest} we get the following perhaps surprising
results on dynamics of \sdss over trees:

\begin{proposition}
  Let $Y$ be a forest and $\Fy$ be a sequence of $Y$-local
  functions. Then all permutation \sds maps $[\Fy,\pi]$ are cycle
  equivalent.
\end{proposition}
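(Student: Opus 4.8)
The plan is to reduce the statement directly to the structural result about the graphs $C(Y)$ already established. The key observation is that "all permutation \sds maps are cycle equivalent" is exactly the assertion that the graph $C(Y)$ is connected, i.e. that $\kappa(Y)=1$. So the strategy is: first invoke Theorem~\ref{thm:shift}, which guarantees that any two permutation update orders lying in the same connected component of $C(Y)$ induce cycle equivalent \sds maps; then it suffices to show $C(Y)$ has a single component when $Y$ is a forest. But this is precisely the content of Corollary~\ref{cor:forest}, which states $\kappa(Y)=\delta(Y)=1$ for any forest $Y$.

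First I would recall that by the definition of $C(Y)$ and the discussion following it, two permutations $\pi,\pi'\in S_Y$ are $\kappa$-equivalent (lie on the same connected component of $C(Y)$) whenever the corresponding \sds maps $[\Fy,\pi]$ and $[\Fy,\pi']$ are forced to be cycle equivalent by Theorem~\ref{thm:shift} and by functional equivalence within a $\sim_Y$ class. Concretely, the number of cycle equivalence classes achievable through update-order variation is bounded above by $\kappa(Y)$, the number of components of $C(Y)$. Since a forest has $\kappa(Y)=1$ by Corollary~\ref{cor:forest}, there is only one such class, so every pair $[\Fy,\pi]$ and $[\Fy,\pi']$ is cycle equivalent.

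The content of Corollary~\ref{cor:forest} itself rests on Proposition~\ref{prop:dunion} together with the base case $\kappa(E_n)=1$: a forest is built from isolated vertices by successively adjoining bridge edges, and by~\eqref{eq:k-bridge} each such operation multiplies $\kappa$ by the $\kappa$-value of the attached piece, which is $1$ at each stage. Hence $\kappa(Y)=1$ for any forest, and the same holds for $\delta(Y)$ by Proposition~\ref{prop:delta} (a forest is bipartite, so $\delta(Y)=\tfrac{1}{2}(\kappa(Y)+1)=1$). Note also that the hypothesis imposes no constraint on $K$ or on the vertex functions $\fy$: the argument uses only the $\kappa$-bound, which follows from Theorem~\ref{thm:shift} and is valid for arbitrary finite state set $K$.

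I do not anticipate a genuine obstacle here, since the proposition is essentially a dynamical restatement of the purely combinatorial fact $\kappa(Y)=1$. The only point requiring a small amount of care is the logical direction: $C(Y)$ being connected shows that all permutation update orders are $\kappa$-equivalent, and $\kappa$-equivalence implies cycle equivalence by Theorem~\ref{thm:shift}, but \emph{not} conversely—so the statement is that the maps are cycle equivalent, which is exactly what the upper bound $\kappa(Y)=1$ delivers. In short, the proof is a one-line appeal to Corollary~\ref{cor:forest} and Theorem~\ref{thm:shift}.
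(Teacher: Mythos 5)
Your proof is correct and follows essentially the same route as the paper: the paper's own argument is simply that $\kappa$-equivalent update orders induce cycle equivalent systems, combined with Corollary~\ref{cor:forest} giving $\kappa(Y)=1$ for a forest. Your additional unpacking of why Corollary~\ref{cor:forest} holds and your remark on the logical direction are accurate but not needed beyond the one-line appeal the paper makes.
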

The proof is clear since $\kappa$-equivalent update orders induce
cycle equivalent systems. So in other words, when $Y$ is a forest, all
permutation \sdss of the form $[\Fy,\pi]$ for fixed $\Fy$ share the
same cycle configuration. This result may not be that significant if
the \sds only has fixed points, or only has a small number of periodic
points. However, for other functions, such as invertible ones, it is
very powerful. The parity functions
$\parity_k\colon\F_2^k\longrightarrow\F_2$ are defined as
$\parity(\y)=\sum_i y_i$, modulo $2$, and are invertible for every
graph $Y$ (see~\cite{Mortveit:01a}). Let $\Par_Y$ be the sequence of
$Y$-local functions induced by the parity vertex functions.

\begin{corollary}
  If $Y$ is a forest then for any $\pi,\sigma\in S_Y$ the maps
  $[\Par_Y,\pi]$ and $[\Par_Y,\sigma]$ are dynamically equivalent.
\end{corollary}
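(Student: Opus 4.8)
The plan is to combine the preceding proposition with the invertibility of the parity maps, upgrading cycle equivalence to dynamical equivalence via the observation that a bijection on a finite set has no transient points.

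First I would record why $[\Par_Y,\pi]$ is a bijection for every $\pi\in S_Y$. As in the discussion following Proposition~\ref{prop:inverse}, each parity vertex function restricts, in its own coordinate and for each fixed choice of the remaining arguments, to one of the two bijections $\F_2\to\F_2$; hence every $Y$-local map $F_i$ is a bijection of $\F_2^n$, and so is the composition $[\Par_Y,\pi]$. Because $\F_2^n$ is finite, a bijection has no transient states: the forward orbit of any point eventually returns to it, so every point is periodic and $\Per[\Par_Y,\pi]=\F_2^n$ for all $\pi$.

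Next, since $Y$ is a forest, Corollary~\ref{cor:forest} gives $\kappa(Y)=1$, so by the preceding proposition all permutation \sds maps $[\Par_Y,\pi]$ are cycle equivalent. Fixing $\pi,\sigma\in S_Y$, this supplies a bijection $h\colon\Per[\Par_Y,\pi]\to\Per[\Par_Y,\sigma]$ satisfying the intertwining relation~\eqref{eq:conj}.

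The final step is to note that for these maps cycle equivalence \emph{coincides} with dynamical equivalence. Indeed, by the second paragraph the two periodic-point sets both equal $\F_2^n$, so $h$ is a bijection of all of $\F_2^n$ and the restrictions appearing in~\eqref{eq:conj} are just the full maps; thus~\eqref{eq:conj} becomes $[\Par_Y,\sigma]\circ h=h\circ[\Par_Y,\pi]$, which is precisely the definition of dynamical equivalence. I do not expect a serious obstacle here: the only point requiring care is the identification $\Per[\Par_Y,\pi]=\F_2^n$ coming from invertibility, which is exactly what allows the coarse notion of cycle equivalence to be promoted to the finer notion of dynamical equivalence.
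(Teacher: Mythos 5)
Your proposal is correct and follows exactly the paper's (implicit) argument: invertibility of the parity maps forces $\Per[\Par_Y,\pi]=\F_2^n$, so the cycle equivalence guaranteed by $\kappa(Y)=1$ for forests is automatically a dynamical equivalence. No substantive differences from the paper's reasoning.
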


The same result holds for the logical negation of the parity function,
which is also invertible. 

We now give examples of the computation of $\kappa$. Even though some
of these results may be derived as special cases of more general
results, they are needed for central examples in
Section~\ref{sec:complexity}. We begin with a result for
$\kappa(Y\oplus v)$, the vertex-join of $Y$ and the vertex $v$. Recall
that the graph $Y\oplus v$ is constructed from $Y$ by adding to $Y$
the vertex $v$ and joining this new vertex to each vertex of $Y$.

\begin{proposition}
  \label{prop:vertexjoin}
  If $Y$ is a graph with $\eset[Y]\neq\varnothing$, then
  \begin{equation}
    \label{eq:vjoin}
    \kappa(Y \oplus v)=2\delta(Y \oplus v)=\alpha(Y)\;.
  \end{equation}
\end{proposition}
\begin{proof}
Each $\kappa$-class of $\Acyc(Y\oplus v)$ contains a unique acyclic
orientation where $v$ is a source~\cite{Macauley:08b}. It follows that
there is a bijection between $\Acyc(Y\oplus v)/\simkappa$ and
$\Acyc(Y)$, hence~\eqref{eq:vjoin}. The complete graph on $n$ vertices
is simply the vertex-join of the complete graph on $n-1$ vertices, and
thus we get the following corollary. 
\end{proof}

\begin{corollary}
\label{cor:kn}
  Let $K_n$ denote the complete graph on $n$ vertices. For $n\ge 2$ we
  have $\kappa(K_n)=(n-1)!$.
\end{corollary}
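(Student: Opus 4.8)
The plan is to recognize the complete graph as a vertex-join and apply Proposition~\ref{prop:vertexjoin}. Concretely, $K_n$ is obtained from $K_{n-1}$ by adjoining a new vertex $v$ joined to every existing vertex, so $K_n=K_{n-1}\oplus v$. For $n\ge 3$ the factor $K_{n-1}$ has a nonempty edge set, so Proposition~\ref{prop:vertexjoin} applies and gives
\begin{equation*}
  \kappa(K_n)=\alpha(K_{n-1})\;.
\end{equation*}
Thus the corollary reduces to counting the acyclic orientations of a complete graph.

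Next I would establish the classical identity $\alpha(K_m)=m!$. An orientation of $K_m$ is a tournament, and a tournament is acyclic precisely when it is transitive; transitive tournaments on a fixed $m$-element vertex set are in bijection with the linear orders on that set, of which there are $m!$. (Equivalently one may invoke the relation between $\alpha$ and the chromatic polynomial $P(K_m,t)=t(t-1)\cdots(t-m+1)$, which evaluates to $\alpha(K_m)=\card{P(K_m,-1)}=m!$.) Taking $m=n-1$ yields $\alpha(K_{n-1})=(n-1)!$, and hence $\kappa(K_n)=(n-1)!$ for all $n\ge 3$.

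Finally I would dispatch the base case $n=2$ separately, since Proposition~\ref{prop:vertexjoin} requires the smaller factor to have at least one edge. Here $K_2$ is a single edge, hence a forest, so Corollary~\ref{cor:forest} gives $\kappa(K_2)=1=1!$; alternatively, the two acyclic orientations of a single edge differ by one source-sink operation and are therefore $\kappa$-equivalent.

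Both computations are routine, so the substance of the argument is entirely the reduction $\kappa(K_n)=\alpha(K_{n-1})$ supplied by the vertex-join identity. The only point demanding any care—and the main (minor) obstacle—is the interface between the two tools: the reduction is valid only once $K_{n-1}$ carries an edge, which is why $n=2$ must be verified by hand rather than folded into the general step.
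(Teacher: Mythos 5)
Your proof is correct and follows essentially the same route as the paper: reduce via the vertex-join identity $\kappa(K_n)=\alpha(K_{n-1})$ from Proposition~\ref{prop:vertexjoin} and then count $\alpha(K_{n-1})=(n-1)!$ (the paper counts acyclic orientations via the $n!$ singleton components of $U(K_n)$, you via transitive tournaments/linear orders, which is the same bijection). Your explicit handling of the $n=2$ case, where $K_1$ has no edges and the proposition does not apply, is a point of care the paper's own proof glosses over.
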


\begin{proof}
  There are $2^{\binom{n}{2}}$ orientations of $K_n$, and by the
  bijection in~\eqref{eq:abij}, precisely $\alpha(K_n)$ of these are
  acyclic, and this is equal to the number of components of the update
  graph $U(K_n)$. Since $U(K_n)$ consists of the $n!$ singleton
  vertices in $S_Y$, $\alpha(K_n)=n!$. By
  Proposition~\ref{prop:vertexjoin},
  $\kappa(K_n)=\alpha(K_{n-1})=(n-1)!$. 
\end{proof}

The quantity $\kappa(Y)$ is in fact a \emph{Tutte-Grothendieck invariant}:

\begin{theorem}[\cite{Macauley:08b}]
\label{thm:recursion}
  Let $e$ be a cycle-edge of $Y$. Then
  \begin{equation}
    \label{eq:recursion}
    \kappa(Y)=\kappa(Y_e')+\kappa(Y_e'')\;,
  \end{equation}
  where $Y_e'$ is the graph obtained from $Y$ by deleting $e$, and
  $Y_e''$ is the graph obtained from $Y$ by contracting $e$. 
\end{theorem}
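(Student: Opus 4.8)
The plan is to reduce to connected $Y$ and then count each $\kappa$-class by a single canonical representative, organized according to the behaviour of $e=\{u,v\}$. First I would invoke Proposition~\ref{prop:dunion}: any components of $Y$ not containing $e$ contribute a common multiplicative factor to $\kappa(Y)$, $\kappa(Y_e')$ and $\kappa(Y_e'')$ alike, so it suffices to treat the component containing $e$; and since $e$ is a cycle-edge, $Y$, $Y_e'$ and $Y_e''$ are then all connected. The underlying geometric picture is the restriction map $\Acyc(Y)\to\Acyc(Y_e')$ that forgets the orientation of $e$: because deleting an edge cannot create an in-edge, a source of $O\in\Acyc(Y)$ is again a source of its restriction and a click at it commutes with restriction, so restriction intertwines the source-to-sink operation and descends to a surjection $\Acyc(Y)/\simkappa\to\Acyc(Y_e')/\simkappa$. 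The recursion is then a matter of computing the fibres, which I would do through canonical representatives.

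The key ingredient, and what I expect to be the main obstacle, is the following canonical-form lemma: for a connected graph $Z$ and any fixed vertex $v_0$, every $\kappa$-class of $\Acyc(Z)$ contains exactly one acyclic orientation in which $v_0$ is the \emph{unique} source. This specializes to Proposition~\ref{prop:vertexjoin} (there the joined vertex dominates, so ``source'' already forces ``unique source''), but the general statement must be proved directly. Existence amounts to clicking away sources other than $v_0$ until $v_0$ becomes the unique source, which is a reachability argument; uniqueness is a confluence statement for the source-to-sink operation and is the delicate half. Granting the lemma, $\kappa(Z)=\card{\mathcal U_{v_0}(Z)}$, where $\mathcal U_{v_0}(Z)$ denotes the set of acyclic orientations of $Z$ with $v_0$ as unique source.

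Applying the lemma to $Y$ with $v_0=u$, the edge $e$ is forced to be $u\to v$ in every $O\in\mathcal U_u(Y)$, so $\mathcal U_u(Y)$ splits as $\mathcal A\sqcup\mathcal B$, where $\mathcal A$ is the set with $v$ having an in-edge other than $e$, and $\mathcal B$ the set where $e$ is the only in-edge of $v$. Deleting $e$ gives a bijection $\mathcal A\to\mathcal U_u(Y_e')$: restriction keeps $u$ the unique source exactly when $v$ retains another in-edge, and conversely every orientation of $Y_e'$ with unique source $u$ has $v$ non-source, so it extends (uniquely, by $e\colon u\to v$, which stays acyclic as $u$ has no in-edges) into $\mathcal A$. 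Contracting $e$ gives a bijection $\mathcal B\to\mathcal U_{\bar v}(Y_e'')$, where $\bar v$ is the merged vertex: the crucial point is that $O\in\mathcal B$ admits no directed $u$-to-$v$ path of length $\ge 2$ (its last edge would be an in-edge of $v$ distinct from $e$), so $O/e$ is acyclic, and one checks $\bar v$ is then its unique source; the inverse reorients every non-$e$ edge at $u$ and $v$ outward, matching $\bar v$ being a source. Hence $\kappa(Y)=\card{\mathcal U_u(Y)}=\card{\mathcal A}+\card{\mathcal B}=\kappa(Y_e')+\kappa(Y_e'')$.

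Finally I would record two consistency checks. The identity is compatible with $\kappa(K_n)=(n-1)!$ from Corollary~\ref{cor:kn}, and in fact the lemma identifies $\kappa(Y)$ with the number of acyclic orientations having a fixed unique source, i.e.\ with $\card{T_Y(1,0)}$; the recursion then also drops out of the deletion-contraction rule for the Tutte polynomial, which applies precisely because a cycle-edge is neither a loop nor a bridge. This independent confirmation is reassuring, but the self-contained argument rests entirely on the canonical-form lemma, whose uniqueness (confluence) half is the step I expect to require the most care.
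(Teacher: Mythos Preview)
The paper does not actually prove this theorem: it is quoted from \cite{Macauley:08b} with the remark that the proof ``is quite involved,'' and no argument is given here. So there is no in-paper proof to compare against line by line. That said, the paper does drop enough breadcrumbs to see that your strategy is precisely the intended one: the proof of Proposition~\ref{prop:vertexjoin} invokes the special case ``each $\kappa$-class of $\Acyc(Y\oplus v)$ contains a unique acyclic orientation where $v$ is a source'' (citing \cite{Macauley:08b}), and the summary states the general canonical-form lemma explicitly---``for each $\kappa$-equivalence class there is a unique acyclic orientation with $v$ as the only source''---again pointing to \cite{Macauley:08b} and \cite{Gioan:07}. Your reduction of the recursion to that lemma, together with the $\mathcal A/\mathcal B$ split at $e=\{u,v\}$ and the two bijections $\mathcal A\leftrightarrow\mathcal U_u(Y_e')$ and $\mathcal B\leftrightarrow\mathcal U_{\bar v}(Y_e'')$, is correct and is exactly the mechanism behind the cited result.

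Two small points worth tightening. First, you correctly flag uniqueness in the canonical-form lemma as the crux; the paper's phrase ``quite involved'' is really about this confluence argument, so in a full write-up that is where the work goes. Second, when you contract $e$ you should say a word about common neighbours of $u$ and $v$: contraction may produce parallel edges, and your bijection $\mathcal B\to\mathcal U_{\bar v}(Y_e'')$ needs $Y_e''$ to retain them (as in the Tutte-polynomial setting) so that the un-contraction is unambiguous. With that convention your inverse map and acyclicity check go through as written, since $u$ is a global source in the lift and hence lies on no directed cycle.
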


The proof of Theorem~\ref{thm:recursion} is quite involved, and along
with Proposition~\ref{prop:dunion}, it implies that
$\kappa(Y)=T(Y,1,0)$, where $T(Y,x,y)$ is the Tutte
polynomial~\cite{Tutte:54}. In contrast, it is well-known that the
number of acyclic orientations of a graph satisfies $\alpha(Y) =
T(Y,2,0)$.

A graph $Y$ has an $n$-handle if it is of the form $Y = Y' \cup
\Circle_n$ where $Y'$ and $\Circle_n$ share precisely one edge.
\begin{figure}[ht]
\centerline{\framebox{\includegraphics[scale=0.9]{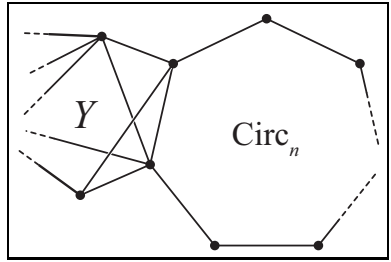}}}
\caption{A graph $Y$ with an $n$-handle.}
\label{fig:handle}
\end{figure}

\begin{proposition}
  \label{prop:k-handle}
  Let $Y$ be a graph with an $n$-handle where $Y = Y' \cup \Circle_n$. Then 
  \begin{equation}
    \label{eq:handle}
    \kappa(Y) = (n-1) \kappa(Y') \;.
  \end{equation}
\end{proposition}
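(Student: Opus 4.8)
The plan is to compute $\kappa(Y)$ for a graph with an $n$-handle by repeatedly applying the Tutte-Grothendieck recursion of Theorem~\ref{thm:recursion}, peeling off one edge of the handle cycle $\Circle_n$ at a time. Since the handle $\Circle_n$ shares exactly one edge with $Y'$, there are $n-1$ edges of $\Circle_n$ that lie outside $Y'$; I would select one such edge $e$ (every edge of $\Circle_n$ is a cycle-edge, so the recursion applies) and write $\kappa(Y) = \kappa(Y_e') + \kappa(Y_e'')$.

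First I would analyze the contraction term. Contracting one edge $e$ of the handle that is not the shared edge shortens the cycle, so $Y_e''$ is a graph with an $(n-1)$-handle on the same $Y'$; this is what sets up the recursion. For the deletion term, deleting $e$ breaks the cycle $\Circle_n$ into a path, which becomes a tree-like appendage attached to $Y'$ along the shared edge. The key observation is that after deletion the handle no longer contributes any cycle, so by Proposition~\ref{prop:dunion} (the bridge/disjoint-union multiplicativity, since $\kappa$ ignores bridge edges) the deleted appendage contributes a factor of $\kappa$ equal to $1$, giving $\kappa(Y_e') = \kappa(Y')$. Concretely, once $e$ is removed the path dangling off $Y'$ consists entirely of bridge edges, so $\kappa(Y_e') = \kappa(Y')\cdot\kappa(\text{path}) = \kappa(Y')$ by Corollary~\ref{cor:forest}.

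Combining these, the recursion reads $\kappa(Y' \cup \Circle_n) = \kappa(Y') + \kappa(Y' \cup \Circle_{n-1})$, where $Y' \cup \Circle_{n-1}$ denotes the graph with an $(n-1)$-handle obtained by contraction. Iterating this relation $n-1$ times peels the handle down until the cycle degenerates: the base case is when the handle has shrunk so that $\Circle_n$ coincides with the shared edge (equivalently $n=2$, a doubled edge or a single extra cycle-edge parallel to the shared one), at which point the recursion terminates with the term $\kappa(Y')$. Each of the $n-1$ peeling steps deposits one summand equal to $\kappa(Y')$, so telescoping yields $\kappa(Y) = (n-1)\,\kappa(Y')$, as claimed.

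The main obstacle, and the step needing the most care, is the deletion-term analysis: I must verify precisely that after deleting a handle edge the remaining structure attaches to $Y'$ only through bridges (or through the single shared edge in a way that contributes no new cycles), so that Proposition~\ref{prop:dunion} genuinely gives a clean factor of $\kappa(Y')$ with no residual dependence on $n$. A subtlety is bookkeeping the base case correctly — identifying exactly when the contracted handle degenerates and confirming the terminal term is $\kappa(Y')$ rather than something involving a multi-edge — and ensuring the intermediate graphs $Y' \cup \Circle_{n-1}$ really are handles in the sense of sharing \emph{precisely one} edge with $Y'$ at every stage, so that the inductive hypothesis applies uniformly.
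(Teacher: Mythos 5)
Your proposal follows essentially the same route as the paper: delete--contract a handle edge via Theorem~\ref{thm:recursion}, observe that deletion leaves only bridge edges so Proposition~\ref{prop:dunion} gives $\kappa(Y')$, while contraction yields an $(n-1)$-handle, and iterate to obtain $\kappa(Y'\cup\Circle_n)=\kappa(Y')+\kappa(Y'\cup\Circle_{n-1})$. The only quibble is bookkeeping at the bottom of the recursion (there are $n-2$ peeling steps plus a terminal term $\kappa(Y'\cup\Circle_2)=\kappa(Y')$, since the doubled edge forces both parallel edges to be oriented the same way in any acyclic orientation), but this lands on the same count of $n-1$ copies of $\kappa(Y')$.
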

\begin{proof}
  Let $e' = \{v,v'\}$ be the edge shared by $Y'$ and $\Circle_n$ and let
  $e$ be the edge in $\Circle_n$ incident with $v$. By applying
  Theorem~\ref{thm:recursion} and Proposition~\ref{prop:dunion} for
  bridge edges we obtain
  \begin{equation*}
    \kappa( Y' \cup \Circle_n) = \kappa(Y') + \kappa(Y' \cup
    \Circle_{n-1}) \;. 
  \end{equation*}
  Equation~\eqref{eq:handle} follows through repeated applications of
  this process.
\end{proof}

As a simple, special case of Proposition~\ref{prop:k-handle} we obtain
$\kappa(\Circle_n)=n-1$. Just take $Y'$ to be the graph with vertex
set $\vset[Y'] = \{1,n\}$ and edge set $\{\{1,n\}\}$ in
Proposition~\ref{prop:k-handle}.

\section{$\kappa(Y)$ as a Complexity Measure}
\label{sec:complexity}

The number of possible orbit structures that one can obtain by varying
the update order is a natural measure for system complexity. As we
have shown, $\kappa(Y)$ is a general upper bound for this number, and
so is $\delta(Y)$ in the case of binary states. Since these bounds are
graph measures we can characterize complexity in terms of the \gds base
graphs. As we have seen, bridge edges do not contribute to periodic
orbit variability at all, and so it suffices to consider the cycles of
the graph. As can be seen in the case of $\Circle_n$, increasing the
size of a cycle does not contribute much, \eg
$\kappa(\Circle_{n+1})=\kappa(\Circle_n)+1$. However, from the result
on graphs with handles it follows that even the addition of a minimal
handle \emph{doubles} the measure $\kappa$, \ie $\kappa(Y \cup
\Circle_3) = 2 \kappa(Y)$, where $Y$ and $\Circle_3$ share precisely
one edge. The following example shows the effect on complexity that
results from increasing the radius of the rules for elementary
cellular automata.

\begin{example}[CA rule radius vs. periodic orbit complexity]
  \label{ex:radius2}
  We have seen that $\kappa(\Circle_n) = n-1$. Thus, for any fixed
  sequence of radius-$1$ vertex functions the number of distinct
  periodic orbit configurations is $O(n)$. This changes dramatically
  for radius-$2$ rules. In this case the \gds base graph is
  $\Circle_{n,2}$ with
  \begin{equation*}
    \vset[\Circle_{n,2}] = \{1,2,\ldots,n\}, \quad\text{and}\quad
    \eset[\Circle_{n,2}] = \bigl\{  \{i,j \}  \mid 1 \le \card{i-j} \le 2
    \bigr\} \;, 
  \end{equation*}
  with index arithmetic modulo $n$. The auxiliary graph $\Circle_{n,2}'$ is
  obtained from $\Circle_{n,2}$ by deleting the edge $\{2,n\}$. The case
  $n=7$ is illustrated in Figure~\ref{fig:circles}.
  \begin{figure}[ht]
    \centerline{
      \framebox{
	\includegraphics[width=250bp]{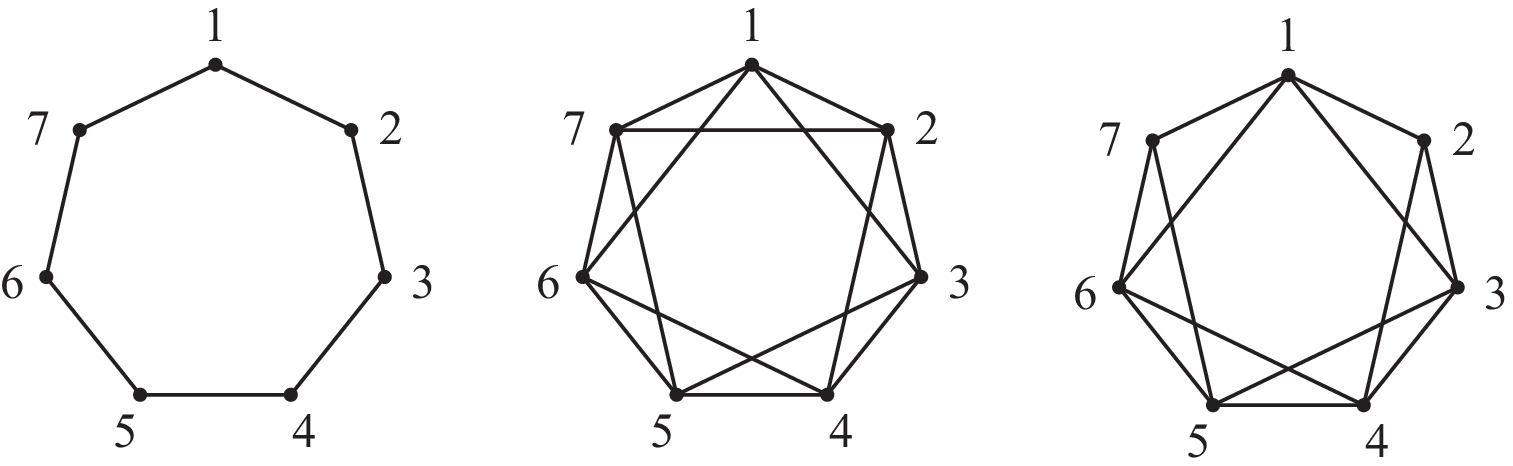}}}
    \caption{From left to right: The graphs $\Circle_7$, $\Circle_{7,2}$
      and $\Circle_{7,2}'$.}
    \label{fig:circles}
  \end{figure}
  
  For simplicity we set $g_n = \kappa(\Circle_{n,2})$ and $c_n =
  \kappa(\Circle_{n,2}')$. Successive uses of the
  recurrence~\eqref{eq:recursion} with edges $e_1 = \{1,n\}$ and $e_2 =
  \{1,n-1\}$ for both $\Circle_{n,2}$ and $\Circle_{n,2}'$ gives
  \begin{equation*}
    c_n = c_{n-1} + 2c_{n-2} + 2^{n-2}\;, \qquad\text{and}\qquad
    g_n = g_{n-2} + c_n + 2 c_{n-2}\;,
  \end{equation*}
  where $c_5 = 18$, $c_6 = 46$, $g_5 = 24$, and $g_6 = 64$. These
  recurrence relations are straightforward to solve with
  \begin{align*}
    \kappa(\Circle_{n,2}') &= \bigl[(3n-5) 2^n -
      4(-1)^n\bigr]/18\text{\quad and}\\
    \kappa(\Circle_{n,2}) &= \bigl[ (2n-6)2^n + 9 - (2n-3)(-1)^n \bigr]/6 \;.
  \end{align*}
  Thus, by increasing the rule radius from $1$ to $2$ we see that the
  number of distinct periodic orbit configurations is $O(n\cdot
  2^n)$. The corresponding bounds for $\delta$ are easily obtained
  from Proposition~\ref{prop:delta}.
\end{example}

We have seen how non-trivial symmetries in the base graph give rise to
dynamically equivalent \sds maps when the vertex functions are
$\Aut(Y)$-invariant. Since dynamical equivalence implies cycle
equivalence we can construct a bound $\bar{\kappa}(Y)$ in the same
manner as for $\bar{\alpha}(Y)$. This bound $\bar{\kappa}(Y)$ thus
reflects the added cycle equivalence that are due to symmetries and
that arise for $\Aut(Y)$-invariant vertex functions. 

We close with an example that illustrates this and the results of
Theorem~\ref{thm:recursion}, and Propositions~\ref{prop:dunion}
and~\ref{prop:vertexjoin} and~\ref{prop:k-handle}.

\begin{example}
  \label{eq:q23}
  Let $Y=Q_2^3$ be the binary $3$-cube, which has automorphism
  group isomorphic to $S_4\times \Z_2$. It is shown
  in~\cite{Barrett:01a} that $\alpha(Q_2^3) = 1862$ and that
  $\bar{\alpha}(Q_2^3)=54$. Thus, there are at most $1862$
  functionally nonequivalent permutation \sdss over $Q_2^3$ for a
  fixed sequence of vertex functions. Likewise, there are at most $54$
  dynamically nonequivalent $\Aut(Q_2^3)$-invariant permutation
  \sdss. It is known that the bound $\bar{\alpha}(Q_2^3)$ is sharp,
  since it is realized for \sdss induced by, \eg the
  $\nor_4$-function.  %%

  The number of cycle equivalence classes is bounded above by
  $\kappa(Q_2^3)$, and from the recursion
  relation~\eqref{eq:recursion} we get (with some foresight at each
  step)
  \def\gfig#1{\kappa(\raise-7pt\hbox{\includegraphics[width=20pt]{#1}})}
  \begin{align*}
    \gfig{c01} &=  \gfig{c02} + \gfig{c03} 
    = \gfig{c04} + 2\gfig{c05} + \gfig{c06} \\
    &= \gfig{c04} + 2\gfig{c10} + 2\gfig{c07} + \gfig{c08} + \gfig{c09}  \\
    &= \gfig{c04} + 4\gfig{c10} + 2\gfig{c11}  + \gfig{c08} + \gfig{c09}  \\
    &= 27 + 64 + 16 + 12 + 14 = 133 \;,
  \end{align*}
  where Propositions~~\ref{prop:vertexjoin} and~\ref{prop:k-handle}
  were used in the last step. Since $Q_2^3$ is bipartite we also
  derive $\delta(Q_2^3) = (133+1)/2 = 67$, and thus in the case of
  $K=\F_2$ there are at most $67$ cycle classes for a fixed
  sequence of vertex functions. Straightforward (but somewhat lengthy)
  calculations show that $\bar{\kappa}(Q_2^3)=\bar{\delta}(Q_2^3)=8$.
  In conclusion, we have
  \[
  \alpha(Q_2^3)=1862\,,\quad\bar{\alpha}(Q_2^3)=54\,,\quad
  \kappa(Q_2^3)=133\,,\quad\delta(Q_2^3)=67\,,\quad
  \bar{\kappa}(Q_2^3) = \bar{\delta}(Q_2^3)=8\;.
  \]
  Thus if $\Fy$ is a sequence of $\Aut(Q_2^3)$-invariant $Y$-local
  functions, then there are at most eight different periodic orbit
  configurations for permutation \sds maps $[\Fy,\pi]$ up to
  isomorphism. Moreover, because $\bar{\kappa}(Q_2^3) =
  \bar{\delta}(Q_2^3)$ taking vertex states from $K = \F_2$ does not
  improve this bound.
\end{example}

This example is only meant as an illustration, and a systematic
treatment incorporating the analysis of the functions $\bar{\kappa}$
and $\bar{\delta}$ for general graphs will be pursued elsewhere.

%% ======================================================

\section{Summary}
\label{sec:summary}

In this paper we have shown how shifts and reflections of update
orders give rise to sequential dynamical systems with isomorphic
periodic orbit configurations. We have also shown how to bound the
number of periodic orbit configurations, and have derived several
properties of this bound $\kappa(Y)$. For binary states we have shown
how $\delta(Y)$ applies to give a sharper bound. Both quantities
$\kappa$ and $\delta$ serve as measures for dynamical complexity.

We also have $\kappa(Y) = T(Y,1,0)$ where $T$ denotes the Tutte
polynomial. There are other mathematical quantities counted by
$T(Y,1,0)$, and thus by $\kappa(Y)$. For example, the set $\Acyc_v(Y)$
consisting of all the acyclic orientation of $Y$ with $v$ as a unique
source is also counted by $\kappa(Y)$, see~\cite{Gioan:07}. In fact,
for each $\kappa$-equivalence class there is unique acyclic
orientation with $v$ as the only source. This allows one to construct
a complete set of representatives for permutations realizing the
possible periodic orbit configurations, see~\cite{Macauley:08b}.

The notion of source-to-sink conversions also shows up in the context
of Coxeter theory (see, \eg~\cite{Bjorner:05} for definitions). For a
Coxeter group with Coxeter graph $Y$ the number of conjugacy classes
of Coxeter elements (see~\cite{Shi:97a}) is also bounded above by
$\kappa(Y)$, \eg~\cite{Shi:01}. In general it is not known if
$\kappa(Y)$ is a sharp bound, but is is known for special classes of
graphs such as $\Circle_n$ as shown by Shi in~\cite{Shi:01}. This
connection between Coxeter theory and \sdss could potentially be very
helpful in further exploring the properties of asynchronous \gdss.

In this paper we have not explored the question of when $\kappa$
(and $\delta$ when $K=\F_2$) is a sharp bound.  That is, for an
arbitrary graph $Y$ does there exist a sequence of vertex function
whose number of non-equivalent orbit configurations equals $\kappa(Y)$?
Proving this would require one to construct such functions for any
given graph. We have also omitted computational aspects related to
cycle equivalence. Given two \sdss, what is the computational
complexity of determining if they are cycle equivalent?  Related
questions have been asked for, \eg fixed point reachability
in~\cite{Barrett:01e}, but see also~\cite{Barrett:06a} for similar
questions. Additional future work includes extending our results from
permutations update orders to general word update orders as well as
further exploring the effects of symmetries in the graph and the
computation of the bounds $\bar{\kappa}$ and $\bar{\delta}$ as
illustrated in Example~\ref{eq:q23}.

%% ------------------------------------------------------------

\medskip\noindent
{\bfseries Acknowledgments}

The first author would like to thank Jon McCammond for many helpful
discussions. Both authors are grateful to the NDSSL group at Virginia
Tech for the support of this research. This work was partially
supported by Fields Institute in Toronto, Canada. 

\end{document}